\theoremstyle{plain}
\newtheorem{theorem}{Theorem} 
\newtheorem{lemma}[theorem]{Lemma}
\theoremstyle{definition}
\newtheorem{definition}[theorem]{Definition}
\theoremstyle{remark}
\newtheorem{remark}[theorem]{Remark}
\newcommand{\dist}{\textrm{dist}}
\title{Decomposable approximations and approximately finite dimensional $C^*$-algebras}
\author{Jorge Castillejos} 
\address{School of Mathematics and Statistics, University of Glasgow, Glasgow G12 8QW, UK.}
\email{j.castillejos-lopez.1@research.gla.ac.uk}
\thanks{Author supported by the Mexican National Council of Science and Technology (CONACYT)}
\date{\today}
\begin{document}

\maketitle

\begin{abstract}
Nuclear $C^*$-algebras having a system of completely positive approximations formed with convex combinations of a uniformly bounded number of order zero summands are shown to be approximately finite dimensional.

\end{abstract}

\section{Introduction}

The nuclear dimension is a non-commutative theory of covering dimension for nuclear $C^*$-algebras introduced by Winter and Zacharias in \cite{nuclear-dimension}, which extends the earlier notion of decomposition rank from \cite{Kir-Win}.  These concepts have played a key role in the recent revolutionary progress in the structure theory of simple nuclear $C^*$-algebras, such as Winter's $\mathcal{Z}$-stability theorems \cite{monstruosity-I,monstruosity-II} which show that simple separable unital nuclear $C^*$-algebras of finite non-commutative covering dimension have the striking algebraic property of tensorially absorbing the Jiang-Su algebra $\mathcal{Z}$.  This forms part of the Toms-Winter regularity conjecture which seeks to characterize those simple nuclear $C^*$-algebras accessible to classification (\textit{cf}.\@ \cite{regularity-prop}) through topological dimension, $\mathcal{Z}$-absorption and the structure of the Cuntz semigroup, and there have been a number of high profile developments relating these properties (\cite{pre-Matui-Sato,matui-sato,aaron-wilhelm}) including a recent converse to \cite{monstruosity-II} in the unique trace case \cite{sato-winter-white}.

As shown by Kirchberg \cite{kirchberg-CPAP} and Choi-Effros \cite{choi-effros-CPAP}, nuclearity can be defined using the completely positive approximation property (CPAP). For commutative $C^*$-algebras the
CPAP is established from partitions of unity subordinate to suitable
open covers of the spectrum $X$ of the algebra. When $X$ is finite
dimensional, these covers can be taken to be finitely coloured, and
this can be seen in additional properties of the resulting
approximation: the maps approximating $C_0(X)$ by finite dimensional
algebras are \emph{finitely decomposable}.  Precisely, when $F$ is
finite dimensional, a completely positive map $\varphi:F\rightarrow A$ is $n$-\emph{decomposable} if there exists a natural number $n$ such that we can express $F = \bigoplus\limits_{k=0}^{n} F_k$ and the restrictions $\varphi |_{F_k}$ are order zero, \textit{i.e.}$\!$ preserve orthogonality (\textit{cf}.\@ \cite{order-zero}). Decomposition rank is defined by asking for completely positive and contractive approximations of the identity map $\mathrm{id}_A$ of the form
$$
A\stackrel{\psi}{\longrightarrow}F\stackrel{\varphi}{\longrightarrow}A
$$
where $\varphi$ is $n$-decomposable for some fixed natural number $n$. The minimum $n$ with this property is the value of the decomposition rank of $A$. Nuclear dimension is defined in a similar way but without requiring $\varphi$ to be contractive. The zero dimensional $C^*$-algebras for these theories are precisely the approximately finite dimensional algebras \cite[Theorem 3.4]{cov-dim} in contrast with other notions of dimension such as the real rank.

A stronger version of the completely positive approximation property was established in 2012 in \cite[Theorem 1.4]{Hir-Kir-Whi}. This shows that the maps
$\varphi$ can always be taken to be decomposable, though the size of the
decomposition may vary with the tolerances in the approximation. Moreover, this theorem shows that these approximations can be taken as a convex combination of contractive order zero maps and this is a crucial ingredient in obtaining a near inclusion type perturbation result for separable nuclear $C^*$-algebras \cite[Section 2]{Hir-Kir-Whi}. Thus, as suggested by Winter in the NSF/CBMS conference in Louisiana 2012 it is natural to investigate the situation when the completely positive and contractive approximations are decomposable as convex combination with a uniformly bounded number of summands. In this note we show that such approximations force the underlying $C^*$-algebra to be approximately finite dimensional (Theorem \ref{main theo}).

\section{Preliminaries}

In this section we are going to recall all the definitions and properties that we will use in the next section. We will denote the set of positive elements of the $C^*$-algebra $A$ as $A_+$ and $A_+^1$ will denote the set of contractive positive elements in the algebra. All the direct sums will be regarded as internal direct sums and, as usual, for $a \in A$ and $X \subset A, \; \dist \left(a,X\right)$ will denote $\inf\limits_{x \in X} \left\| a-x \right\|$. 

Approximately finite dimensional $C^*$-algebras were defined originally by Bratteli (\cite[Definition 1.1]{bratelli-AF}). A $C^*$-algebra $A$ is approximately finite dimensional (AF) if it contains an increasing sequence of finite dimensional $C^*$-algebras $\left\{ A_n \right\}_{n \in \mathbb{N}}$ such that $\bigcup\limits_{n \in \mathbb{N}} A_n$ is dense in $A$. 
It is a consequence of the definition that AF-algebras are separable and Bratteli proved the following theorem, known as the local characterisation of AF-algebras (\cite[Theorem 2.2]{bratelli-AF}).

\begin{theorem}[Bratteli]
	A separable $C^*$-algebra $A$ is AF if and only if for every finite subset $\mathfrak{F} \subset A$ and $\varepsilon > 0$ there exists a finite dimensional $C^*$-algebra $B \subset A$ such that 
	$$
	\dist \left(a, B\right) < \varepsilon
	$$
	for all $a \in \mathfrak{F}$.
\end{theorem}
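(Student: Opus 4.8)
The forward (``only if'') direction is immediate: if $A$ is AF with witnessing increasing sequence $\{A_n\}$ of finite dimensional subalgebras, then given a finite set $\mathfrak{F}$ and $\varepsilon > 0$, for each $a \in \mathfrak{F}$ the density of $\bigcup_n A_n$ supplies an index $n_a$ and an element of $A_{n_a}$ within $\varepsilon$ of $a$; taking $B = A_N$ with $N = \max_{a \in \mathfrak{F}} n_a$ (a finite maximum) works because the $A_n$ are nested. All the content lies in the converse, so the plan is to manufacture a genuinely increasing sequence of finite dimensional subalgebras out of the a priori unrelated subalgebras that the local hypothesis produces.

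First I would fix a dense sequence $\{x_k\}_{k}$ in $A$ (using separability) together with a summable sequence of tolerances $\varepsilon_n$. I then construct, by induction on $n$, finite dimensional subalgebras $C_n \subseteq A$ equipped with a fixed system of matrix units $\{e^{(n)}_{ij}\}$, arranged so that $C_n$ both approximates the data $\{x_1, \dots, x_n\}$ to within $1/n$ and approximately absorbs its predecessor. Concretely, at stage $n$ I apply the local hypothesis to the finite set $\{x_1, \dots, x_n\} \cup \{e^{(n-1)}_{ij}\}$ with a tolerance $\delta_n$ chosen small enough that the perturbation estimate below returns a quantity less than $\varepsilon_n$; this yields $C_n$ with $\dist(x_k, C_n) < 1/n$ for $k \le n$ and with the matrix units of $C_{n-1}$ lying within $\delta_n$ of $C_n$.

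The crux, and the step I expect to be the main obstacle, is a perturbation lemma promoting approximate containment of finite dimensional subalgebras to exact containment after conjugation by a unitary near the identity: if the matrix units of $C_{n-1}$ are $\delta_n$-close to elements of $C_n$, then there is a unitary $u_n$ in the unitization $\tilde{A}$ with $\|u_n - 1\| < \varepsilon_n$ and $u_n C_{n-1} u_n^* \subseteq C_n$. This rests on the stability of the matrix-unit relations (approximate matrix units can be corrected to genuine ones nearby, via functional calculus and successive orthogonalisation) together with the fact that two nearby \emph{exact} systems of matrix units are intertwined by a unitary close to $1$. Making these estimates effective, with explicit control of $\varepsilon_n$ in terms of $\delta_n$, is where the real work lies.

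Finally I would intertwine. Setting $W_1 = 1$ and $W_n = W_{n-1}u_n^*$, the bound $\|W_n - W_{n-1}\| = \|u_n - 1\| < \varepsilon_n$ and summability of $\{\varepsilon_n\}$ force $W_n \to W$ for a unitary $W \in \tilde{A}$. A direct check gives $W_n^* W_{n-1} = u_n$, so the relation $u_n C_{n-1} u_n^* \subseteq C_n$ transforms into $B_{n-1} \subseteq B_n$ for the subalgebras $B_n := W_n C_n W_n^*$; thus $\{B_n\}$ is an increasing sequence of finite dimensional subalgebras. For density I would use that $\mathrm{Ad}(W)$ is an automorphism of $A$ and estimate, for $c \in C_n$ approximating $x_k$, the distance $\|W x_k W^* - W_n c W_n^*\| \le \|x_k - c\| + 2\|W - W_n\|\,\|c\|$, which tends to $0$; hence each $W x_k W^*$ is approximated arbitrarily well by $\bigcup_n B_n$. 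Since $\{W x_k W^*\}_k$ is again dense in $A$, it follows that $\overline{\bigcup_n B_n} = A$, so $A$ is AF.
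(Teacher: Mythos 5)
The paper does not prove this statement---it is quoted with a citation to Bratteli's original article---and your argument is precisely the classical one from that source: build finite dimensional subalgebras approximating a dense sequence and approximately containing their predecessors' matrix units, upgrade approximate containment to exact containment via a unitary close to $1$, and intertwine. The outline is correct; the only piece you leave as a black box, the perturbation lemma for approximate matrix units, is standard and true (with $\delta_n$ allowed to depend on the size of $C_{n-1}$, which your inductive setup permits), so no genuine gap remains.
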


Winter proved that a separable $C^*$-algebra has nuclear dimension $0$ if and only if it is AF using the local characterisation (\cite[Remark 2.2.(iii)]{nuclear-dimension}). There are two possible definitions of non separable AF-algebras, either as algebras containing a directed family of finite dimensional $C^*$-subalgebras with dense union (equivalently as the direct limit of finite dimensional $C^*$-algebras over general directed sets) or via the local characterisation. These are not the same (\cite[Theorem 1.5]{farah-AF}), so in this paper we choose to work with the local characterisation as the definition of AF, so that AF-algebras are precisely those with nuclear dimension $0$.  

\begin{definition}
	A $C^*$-algebra is AF if for every finite subset $\mathfrak{F} \subset A$ and $\varepsilon > 0$ there exists a finite dimensional $C^*$-algebra $B \subset A$ such that 
	$$
	\dist \left(a, B\right) < \varepsilon
	$$
	for all $a \in \mathfrak{F}$. 
\end{definition}


The multiplier algebra, $\mathcal{M}(A)$, is the $C^*$-analogue of the Stone-\v{C}ech compactification. For our purposes we use the original construction, due to Busby \cite{busby}, using double centralizers.

\begin{definition}
Let $A$ be a $C^*$-algebra. A \emph{double centralizer} is a pair $(L,R)$ of maps $L,R:A \longrightarrow A$ such that $aL(b)=R(a)b$ for all $a,b \in A$. $\mathcal{M}(A)$ will denote the set of double centralizers of $A$.
\end{definition}

One can then define operations on $\mathcal{M}(A)$ in order to equip it with the structure of a unital $C^*$-algebra \cite[Definition 2.10, Theorem 2.11]{busby}. Moreover, we have an embedding $\mathfrak{M}:A \longrightarrow \mathcal{M}(A)$ given by
\begin{equation}
\mathfrak{M}_a = (L_a, R_a)
\end{equation} 
where $L_a$ and $R_a$ are defined as left and right multiplication by $a \in A$, respectively.

The following lemma will be useful in the proof of Lemma \ref{lemma 2 omega}. This is where the hypothesis of having convex combinations is used.

\begin{lemma}\label{multiplier lemma}
Let $A$ be a $C^*$-algebra and $a_1,a_2 \in A_+^1$. Let $B$ be a $C^*$-subalgebra of $A$ and let $\lambda_1$ and $\lambda_2$ be strictly positive real numbers satisfying $\lambda_1 + \lambda_2 = 1$. If $a_1 b \in B$ and $\left(\lambda_1 a_1 + \lambda_2 a_2 \right) b = b$ for all $b \in B$, then $a_1 b = a_2 b = b$ for all $b \in B$.  
\end{lemma}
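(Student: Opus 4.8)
The plan is to reduce the statement to the elementary fact that the unit is an extreme point of the positive unit ball of a $C^*$-algebra, the strict positivity of the convex weights $\lambda_1,\lambda_2$ being exactly what drives the argument (as flagged in the remark preceding the statement). Throughout I would work in the minimal unitization $\widetilde{A}$, where $a_1,a_2\in A_+^1$ satisfy $0\le a_i\le 1$, so that $1-a_1$ and $1-a_2$ are positive contractions. The hypothesis $a_1 b\in B$ plays no role in the norm estimate itself; its purpose is to guarantee that $a_1$ (and then $a_2$) restrict to genuine multipliers of $B$, which is the form in which the conclusion is consumed in Lemma \ref{lemma 2 omega}.

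Concretely, writing $a=\lambda_1 a_1+\lambda_2 a_2$ and using $\lambda_1+\lambda_2=1$, the hypothesis $ab=b$ rearranges to
\begin{equation*}
\lambda_1(1-a_1)b+\lambda_2(1-a_2)b=0\qquad\text{for all }b\in B.
\end{equation*}
First I would fix $b\in B$ and compress this identity on the left by $b^*$, obtaining $\lambda_1\,b^*(1-a_1)b+\lambda_2\,b^*(1-a_2)b=0$. Here each summand $b^*(1-a_i)b$ is positive, since $1-a_i\ge 0$, and each weight $\lambda_i$ is strictly positive; as the sum of two positive elements vanishes, both must vanish, and in particular $b^*(1-a_1)b=0$. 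Writing $c=(1-a_1)^{1/2}b$, this says $c^*c=0$, hence $c=0$, and therefore $(1-a_1)b=(1-a_1)^{1/2}c=0$, i.e.\ $a_1 b=b$. The identical argument applied to the second summand gives $a_2 b=b$, as desired.

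The one place demanding care is the passage to a unital setting: since $A$ need not be unital, the elements $1-a_i$ live in $\widetilde{A}$ (equivalently in $\mathcal{M}(A)$ via the Busby embedding $\mathfrak{M}$), and one should check that the compressed elements $b^*(1-a_i)b=b^*b-b^*a_ib$ lie back in $A$, which they do. Beyond that the proof rests only on the two standard facts that a sum of positive elements equal to zero forces each summand to be zero, and that $c^*c=0$ implies $c=0$. I do not expect a genuine obstacle here; the essential input is simply that $\lambda_1,\lambda_2>0$, which is the manifestation of the convexity hypothesis. Alternatively, one could phrase the same computation through the idealizer homomorphism into $\mathcal{M}(B)$ afforded by $a_1 b\in B$, identifying $\lambda_1 a_1+\lambda_2 a_2$ with the unit of $\mathcal{M}(B)$ and invoking extremality of the unit among positive contractions; I find the direct compression argument cleaner.
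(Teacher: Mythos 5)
Your proof is correct, and it takes a genuinely different and more elementary route than the paper. The paper passes to the multiplier algebra $\mathcal{M}(B)$: the hypothesis $a_1b\in B$ (together with $ba_1=(a_1b^*)^*\in B$) is used to show that $\mathfrak{M}_{a_1}$, and hence $\mathfrak{M}_{a_2}$, define elements of $\mathcal{M}(B)$, that $\mathfrak{M}_{\lambda_1a_1+\lambda_2a_2}=1_{\mathcal{M}(B)}$, and then the conclusion follows from the fact that the unit is an extreme point of the unit ball of a unital $C^*$-algebra \cite[Theorem II 3.2.17]{blackadar}. Your compression argument replaces all of this with the observation that $\lambda_1 b^*(1-a_1)b+\lambda_2 b^*(1-a_2)b=0$ is a vanishing sum of positive elements with strictly positive weights, so each summand vanishes, and $b^*(1-a_i)b=0$ forces $(1-a_i)^{1/2}b=0$ and hence $a_ib=b$. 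This is essentially the standard proof of extremality of the unit, unwound and specialized to positive contractions, so nothing conceptual is lost; what is gained is that you avoid the double-centralizer machinery entirely and, as you correctly observe, you never use the hypothesis $a_1b\in B$ --- your argument shows that hypothesis is redundant for the statement as given (it is of course implied by the conclusion). The only point to be careful about, which you address, is that $1-a_i$ lives in the unitization $\widetilde{A}$ rather than in $A$; since the whole computation takes place in $\widetilde{A}$ and the final identity $a_ib=b$ involves only elements of $A$, this causes no difficulty. Both proofs are valid; yours is shorter and self-contained, while the paper's makes explicit the ``extreme point of the unit ball'' mechanism it advertises as the place where convexity is exploited.
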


\begin{proof}
Using the hypothesis we have that $\mathfrak{M}_{a_1} = \left(L_{a_1}|_B, R_{a_1}|_B\right) \in \mathcal{M}(B)$. Similarly if $a = \lambda_1 a_1 + \lambda_2 a_2$ then $\mathfrak{M}_a \in \mathcal{M}(B)$. In fact, $\mathfrak{M}_a = 1_{\mathcal{M}(B)}$. We have, for all $b \in B$,
\begin{equation}
\lambda_2 a_2 b =  b - \lambda_1 a_1 b.
\end{equation}
By the hypothesis, the right side of the previous equation is in $B$, therefore $a_2 b \in B$ for all $b \in B$ and this yields $\mathfrak{M}_{a_2} \in \mathcal{M}(B)$. It is also straightforward to see that
\begin{equation}
1_{\mathcal{M}(B)} = \mathfrak{M}_a = \lambda_1 \mathfrak{M}_{a_1} + \lambda_2 \mathfrak{M}_{a_2}.
\end{equation}
By \cite[Theorem II 3.2.17]{blackadar}, $1_{\mathcal{M}(B)}$ is an extreme point of the unit ball of $\mathcal{M}(B)$. Since $\mathfrak{M}_{a_1}$ and $\mathfrak{M}_{a_2}$ lie in the unit ball we have
\begin{equation*}
1_{\mathcal{M}(B)} = \mathfrak{M}_{a_1} = \mathfrak{M}_{a_2}. \tag*{\qedhere}
\end{equation*}
\end{proof}

The next technical lemma will be used in the proof of the main theorem and it will allow us to work with one order zero map instead of a convex combination.

\begin{lemma}\label{lemma 3 omega}
Let $A$ be a $C^*$-algebra, $\varepsilon > 0$ and let $\left(\lambda_k\right)_{k \in \mathbb{N}}$ be a sequence contained in $[0,1]$ such that $\sum\limits_{k=1}^\infty \lambda_k =1$. If $p \in A$ is a projection and $a_k \in A_+^1$, $k \in \mathbb{N}$, satisfy
\begin{equation} \label{p - convex comb}
\left\| p - \sum_{k} \lambda_k a_k \right\| \leq \varepsilon.
\end{equation}
Then 
\begin{equation}
\left\| p - a_k \right\| \leq \sqrt{\lambda_k^{-1} \varepsilon} \left(\sqrt{\lambda_k^{-1} \varepsilon} + 1 \right) 
\end{equation}
for any $\lambda_k \neq 0$.
\end{lemma}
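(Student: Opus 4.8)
The plan is to fix an index $k$ with $\lambda_k \neq 0$, write $b = \sum_j \lambda_j a_j$ (a norm-convergent sum, since $\sum_j \lambda_j = 1$ and each $\|a_j\| \le 1$), set $\delta = \lambda_k^{-1}\varepsilon$, and reduce the desired estimate on $\|p - a_k\|$ to two separate corner bounds for the single element $a_k$: one for $p(1-a_k)p$ and one for $(1-p)a_k(1-p)$, each of order $\delta$.

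First I would extract these corner estimates by compressing the hypothesis \eqref{p - convex comb} by $p$ and by $1-p$ and exploiting positivity. Compressing $p - b$ by $1-p$ and using $(1-p)p(1-p) = 0$ gives $\|(1-p)b(1-p)\| \le \varepsilon$; since $(1-p)b(1-p) = \sum_j \lambda_j (1-p)a_j(1-p)$ is a sum of positive elements, the single term $\lambda_k(1-p)a_k(1-p)$ is dominated by the whole sum, so $\|(1-p)a_k(1-p)\| \le \lambda_k^{-1}\varepsilon = \delta$. Symmetrically, compressing by $p$ gives $\|p - pbp\| \le \varepsilon$, and $p - pbp = \sum_j \lambda_j\, p(1-a_j)p$ is again a sum of positives (because $0 \le a_j \le 1$), whence $\|p(1-a_k)p\| \le \delta$. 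This is exactly where the convex-combination hypothesis does its work: positivity of each summand lets me peel off the $k$-th term at the cost of the factor $\lambda_k^{-1}$.

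Next, writing $a = a_k$, I would pass from these corner bounds to off-diagonal bounds. From $(1-p)a(1-p) \ge 0$ together with $a^2 \le a$ one gets $\|a(1-p)\|^2 = \|(1-p)a^2(1-p)\| \le \|(1-p)a(1-p)\| \le \delta$, so $\|a(1-p)\| \le \sqrt\delta$ and hence $\|pa(1-p)\| \le \sqrt\delta$. Then I would decompose the self-adjoint element $p - a$ relative to the two corners as $p - a = D + O$, where $D = p(1-a)p - (1-p)a(1-p)$ is the diagonal part and $O = -pa(1-p) - (1-p)ap$ is the off-diagonal part. Since the two summands of $D$ are orthogonal self-adjoint elements, $\|D\| = \max\{\|p(1-a)p\|, \|(1-p)a(1-p)\|\} \le \delta$; and since $(pa(1-p))^2 = 0$ (using $(1-p)p=0$) together with the orthogonality of $pa(1-p)a p$ and $(1-p)apa(1-p)$, the element $O$ satisfies $\|O\| = \|pa(1-p)\| \le \sqrt\delta$. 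The triangle inequality then yields $\|p - a\| \le \delta + \sqrt\delta = \sqrt{\lambda_k^{-1}\varepsilon}\bigl(\sqrt{\lambda_k^{-1}\varepsilon} + 1\bigr)$.

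The main obstacle is obtaining the sharp constant. The naive route, bounding $\|p - a\| \le \|p - ap\| + \|ap - a\| \le 2\sqrt\delta$, produces $2\sqrt\delta$ rather than the claimed $\delta + \sqrt\delta$. The improvement comes precisely from separating the diagonal contribution (which is genuinely of size $\delta$, not $\sqrt\delta$) from the off-diagonal contribution, and from the orthogonality relations $p(1-a)p \perp (1-p)a(1-p)$ and $(pa(1-p))^2 = 0$, which make $\|D\|$ and $\|O\|$ computable exactly rather than merely bounded by a sum. Verifying these orthogonality facts, and the identity $\|x+y\| = \max(\|x\|,\|y\|)$ for orthogonal self-adjoints, is routine but is the crux of the constant.
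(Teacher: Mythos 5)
Your argument is correct, and its overall skeleton matches the paper's: corner estimates of order $\delta=\lambda_k^{-1}\varepsilon$ for $p(1-a_k)p$ and $(1-p)a_k(1-p)$, an off-diagonal estimate of order $\sqrt{\delta}$, and then the $2\times 2$ corner decomposition giving $\delta+\sqrt\delta$. The differences are localized but genuine. For the corner estimates you peel the $k$-th term directly out of the full sum $\sum_j\lambda_j(1-p)a_j(1-p)$ (resp.\ $\sum_j\lambda_j p(1-a_j)p$) using positivity of every summand, whereas the paper first regroups the sum as $\lambda_k a_k+(1-\lambda_k)b$ and then discards the positive remainder; these are equivalent. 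The more substantive divergence is the off-diagonal bound: the paper represents $A$ on a Hilbert space and runs a contradiction argument with a carefully chosen vector $h=-h_1+h_2$ to show $\|pa_k(1-p)\|\le\sqrt\delta$, while you get the same bound algebraically from $a_k^2\le a_k$ and the $C^*$-identity via $\|a_k(1-p)\|^2=\|(1-p)a_k^2(1-p)\|\le\|(1-p)a_k(1-p)\|$. Your route is shorter, representation-free for that step, and makes the source of the $\sqrt{\phantom{x}}$ transparent; the paper's vector argument is more hands-on but needs the concrete $B(H)$ picture. Your final assembly via the orthogonal diagonal part $D$ and the nilpotent-plus-adjoint off-diagonal part $O$, with $\|D\|=\max$ of the corners and $\|O\|=\|pa_k(1-p)\|$, is exactly the estimate the paper states in its last display, just with the justification spelled out. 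One cosmetic point: expressions such as $1-p$ and $p(1-a_k)p$ require a unit, so you should say explicitly that you work in the unitization, in $\mathcal{M}(A)$, or in some $B(H)$ containing $A$, as the paper does.
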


\begin{proof}
We may suppose $A \subset B(H)$ for some Hilbert space $H$. For fixed $k$ consider
\begin{equation}
b= \frac{1}{1 - \lambda_k} \sum_{i \neq k} \lambda_i a_i \in A_+^1.
\end{equation}
With this construction we can treat the sum as the convex combination of only two summands, precisely
\begin{equation}
\sum\limits_{i} \lambda_i a_i = \lambda_k a_k + \left(1 - \lambda_k \right)b. 
\end{equation}
By (\ref{p - convex comb}), we get $p - \left( \lambda_ka_k + \left(1 - \lambda_k \right)b \right) \leq \varepsilon 1_{B(H)}$. Thus
$\lambda_k \left(p - pa_kp\right) + (1-\lambda_k)\left(p - pbp \right) \leq \varepsilon p$. Since $p-pa_kp$ and $p - pbp$ are positive, the previous inequality leads to
\begin{equation}
0 \leq p - pa_kp \leq \lambda_k^{-1} \varepsilon p - \left(\lambda_k^{-1} -1\right)\left(p-pbp \right) \leq \lambda_k^{-1} \varepsilon p.
\end{equation}
Thus
\begin{equation}
\|p-p a_k p\| \leq \lambda_k^{-1}\varepsilon 
\end{equation}
and similarly we obtain
\begin{equation}
\|(1_{B(H)}-p)a_k (1_{B(H)}-p)\| \leq \lambda_k^{-1}\varepsilon. \label{norm 1-p a 1-p}
\end{equation}
 
We can write any $h \in H$ as $h_1 + h_2$ where $h_1 = p(h)$ and $h_2 = (1_{B(H)} - p)(h)$. Since $a_k$ is positive, we have
\begin{align}
0  &\leq \langle a_k h , h \rangle\\ 
  &=  \langle pa_k p(h_1),h_1 \rangle + 2 Re \langle pa_k(1_{B(H)} -p)\left(h_2\right),h_1 \rangle + \langle (1_{B(H)}-p)a_k(1_{B(H)}-p)(h_2),h_2\rangle. \label{fea}
\end{align}

Let us suppose that $\| pa_k (1_{B(H)}-p) \| > \sqrt{\lambda_k^{-1}\varepsilon}$. Then
there exists $h_2 \in (1_{B(H)} - p)\left(H\right)$ with $\| h_2 \|= 1$ such that  $\| pa_k(1_{B(H)}-p)(h_2) \| > \sqrt{\lambda_k^{-1}\varepsilon}$. Set $h_1 = pa_k(1_{B(H)}-p)(h_2)$ and considering $h=- h_1 + h_2$ in (\ref{fea}) we obtain
\begin{eqnarray}
0 &\leq& \langle pa_k p(-h_1),-h_1 \rangle + 2 Re \langle pa_k(1_{B(H)} -p)(h_2),-h_1 \rangle \\
& & 
+ \; \langle (1_{B(H)}-p)a_k(1_{B(H)}-p)(h_2),h_2\rangle \\
& \overset{(\ref{norm 1-p a 1-p})}{\leq} & \langle p(h_1),h_1 \rangle - 2 \langle h_1,h_1 \rangle + \lambda_k^{-1}\varepsilon \\
& = & - \|h_1\|^2 + \lambda_k^{-1}\varepsilon \\
& < & - \lambda_k^{-1}\varepsilon + \lambda_k^{-1}\varepsilon = 0
\end{eqnarray}
which is clearly a contradiction. Therefore 
\begin{equation}
\| (1_{B(H)}-p)a_k p \|= \| pa_k (1_{B(H)} - p) \| \leq \sqrt{\lambda_k^{-1}\varepsilon}. 
\end{equation}
Finally we obtain
\begin{align}
\| p - a_k \| \leq & \max\left\{\| p - pa_kp \|, \| (1_{B(H)}-p)a_k(1_{B(H)}-p) \|\right\} \\
& + \; \max\left\{ \| pa_k(1_{B(H)} -p)\| ,  \|(1_{B(H)} -p)a_k p \| \right\}\\
 \leq & \, \lambda_k^{-1} \varepsilon + \sqrt{\lambda_k^{-1} \varepsilon} = \sqrt{\lambda_k^{-1} \varepsilon} \left( \sqrt{\lambda_k^{-1} \varepsilon} + 1 \right). \tag*{\qedhere}
\end{align}
\end{proof}

We will refer to a completely positive map as a CP map and, similarly, a completely positive and contractive map as a CPC map. Let us now recall the definition of order zero maps introduced by Winter and Zacharias in \cite{order-zero}.

\begin{definition}
A CP map $\varphi:A \longrightarrow B$ between $C^*$-algebras has \emph{order zero} if it preserves orthogonality; \textit{i.e.}$\!$ if $a,b \in A_{+}$ satisfy $ab=0$ then $\varphi(a)\varphi(b)=0$.
\end{definition}

Based on a result of Wolff (\cite[Theorem 2.3]{wolff}), 
Winter and Zacharias proved in \cite[Theorem 3.3]{order-zero} the following structure theorem for CP maps of order zero.

\begin{theorem}\label{order zero}
Let $\varphi: A \longrightarrow B$ a CP map of order zero between $C^*$-algebras and set $C:= C^*\left(\varphi\left(A\right)\right)$. Then there exist a positive $h \in \mathcal{M}\left(C\right) \cap C'$ with $\|h\|=\|\varphi\|$ and a $^*$-homomorphism $$\rho: A \longrightarrow \mathcal{M}\left(C\right)\cap \{h\}'$$ such that
\begin{equation}
\varphi(a) = h \rho(a) 
\end{equation}
for all $a \in A$. If $A$ is unital, then one may take $h = \varphi(1_A)$.
\end{theorem}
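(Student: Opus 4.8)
The plan is to follow the strategy of extracting the algebraic skeleton of an order zero map from Wolff's theorem and then reconstructing the $^*$-homomorphism $\rho$ by formally inverting $h$. First I would reduce to the case where $A$ is unital and $\varphi$ is contractive: if $A$ has no unit, pass to the unitisation $\tilde{A}$, extend $\varphi$ to a CP order zero map on $\tilde{A}$ (the extension still preserves orthogonality), prove the statement there and restrict, while a positive rescaling disposes of the normalisation. With $A$ unital, set $h := \varphi(1_A)$. Since $\varphi$ is positive its norm is attained at the unit, so $\|h\| = \|\varphi\|$, giving one of the required conclusions immediately.

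Next I would feed $\varphi$ into Wolff's structure theorem \cite[Theorem 2.3]{wolff}. An order zero CP map is in particular a self-adjoint, disjointness preserving linear map, so Wolff's result yields the two algebraic identities that carry the whole argument: $h$ commutes with every $\varphi(a)$, and $\varphi$ is multiplicative relative to $h$, that is
\begin{equation*}
\varphi(a)\varphi(b) = h\,\varphi(ab) \qquad (a,b \in A).
\end{equation*}
The commutation relation shows at once that $h$ commutes with the generators of $C = C^*(\varphi(A))$, hence $h \in \mathcal{M}(C) \cap C'$.

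The heart of the proof is to build $\rho$ by dividing the desired factorisation $\varphi = h\,\rho$ through $h$, which I would carry out in the bidual $C^{**}$. Letting $p \in C^{**}$ be the range projection of $h$, the element $h$ is invertible on $pC^{**}p$, and I would define
\begin{equation*}
\rho(a) := h^{-1}\varphi(a),
\end{equation*}
interpreted via the affiliated inverse on $pC^{**}p$ or as a strict limit of $(h+1/n)^{-1}\varphi(a)$. Using the commutativity of $h$ and the identity above, one computes $\rho(a)\rho(b) = h^{-2}\varphi(a)\varphi(b) = h^{-1}\varphi(ab) = \rho(ab)$, so $\rho$ is multiplicative; the $^*$-preserving property of $\varphi$ gives $\rho(a^*) = \rho(a)^*$; and $h\rho(a) = \varphi(a)$ holds because $p$ acts as a unit on $C$. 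Being built from $h$ and $h^{-1}$, the map $\rho$ automatically takes values in $\{h\}'$.

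It remains to check that $\rho(a) \in \mathcal{M}(C)$, and here I expect the main obstacle to lie, since $h$ is typically not invertible and $h^{-1}$ only exists as an unbounded affiliated object. The clean way around this is the double-centralizer computation
\begin{equation*}
\rho(a)\varphi(b) = h^{-1}\varphi(a)\varphi(b) = \varphi(ab) \in C, \qquad \varphi(b)\rho(a) = \varphi(ba) \in C,
\end{equation*}
which shows that $\rho(a)$ multiplies generators of $C$ back into $C$; by linearity, continuity and density this yields $\rho(a)C \subseteq C$ and $C\rho(a) \subseteq C$, so $\rho(a)$ defines a double centralizer and thus $\rho(a) \in \mathcal{M}(C)$. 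Evaluating at the unit gives $\rho(1_A) = h^{-1}h = p = 1_{\mathcal{M}(C)}$, confirming that $h = \varphi(1_A)$ may be chosen in the unital case. The genuinely delicate points are justifying the manipulations involving $h^{-1}$ as honest identities in $\mathcal{M}(C)$ rather than merely in $C^{**}$, and verifying the boundedness (indeed contractivity, when $\varphi$ is contractive) of $\rho$.
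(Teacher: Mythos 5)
First, a point of comparison: the paper does not prove this statement at all --- Theorem \ref{order zero} is quoted from Winter and Zacharias \cite{order-zero} as a black box, so your proposal can only be measured against their published argument.

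Against that argument, your outline has two genuine gaps. The central one is the appeal to Wolff: his theorem for bounded self-adjoint disjointness-preserving maps on a unital domain yields that $h=\varphi(1_A)$ commutes with $\varphi(A)$ together with a \emph{Jordan} $^*$-homomorphism $\rho$ with $\varphi=h\rho$, i.e.\ only the symmetrised identity $\varphi(a)\varphi(b)+\varphi(b)\varphi(a)=h\bigl(\varphi(ab)+\varphi(ba)\bigr)$, and not the full multiplicative identity $\varphi(a)\varphi(b)=h\varphi(ab)$ on which your whole construction of $\rho$ rests. Upgrading the Jordan $^*$-homomorphism to a genuine $^*$-homomorphism is exactly where \emph{complete} positivity must be used (one kills the antimultiplicative summand of the Jordan map, e.g.\ because transposition is not $2$-positive, exploiting that the matrix amplifications of $\varphi$ are again order zero); this step is entirely absent from your argument, and without it the computation $\rho(a)\rho(b)=h^{-2}\varphi(a)\varphi(b)=\rho(ab)$ does not get off the ground. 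The second gap is the reduction to the unital case: you cannot simply ``extend $\varphi$ to a CP order zero map on $\tilde{A}$,'' since the only sensible value for the extension at $1_{\tilde{A}}$ is the element $h$ you are trying to construct (which lives in $\mathcal{M}(C)$ rather than $B$), and verifying that the extended map still preserves orthogonality of positive elements of the form $a$ and $b+\mu 1_{\tilde{A}}$ already presupposes the commutation and multiplication relations of the theorem. The route Winter and Zacharias take is to pass to the bidual $\varphi^{**}:A^{**}\to B^{**}$, check that $\varphi^{**}$ is again order zero, apply Wolff there with $h=\varphi^{**}(1_{A^{**}})$, and then identify $h$ as an element of $\mathcal{M}(C)$ via the strict convergence of $\varphi(e_\lambda)$ for an approximate unit $(e_\lambda)$ of $A$. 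By contrast, your double-centralizer verification that $\rho(a)\in\mathcal{M}(C)$ and the bound $\rho(a)^*\rho(a)=\rho(a^*a)\le\|a\|^2 1_{\mathcal{M}(C)}$ are sound \emph{once} multiplicativity is in hand, so the missing content is concentrated in the two reductions above.
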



%

The proof of the following lemma is essentially the proof of \cite[Proposition 3.2 (c)]{cov-dim}. 

\begin{lemma}\label{pi - varphi}
	For every $\delta >0$ there exists $\gamma >0$ such that for any CPC order zero map $\varphi: A \longrightarrow B$ between $C^*$-algebras, with $A$ unital, satisfying 
	$$
	\left\| \varphi\left(1_A\right) - \varphi\left(1_A\right)^2 \right\| < \gamma
	$$
	there exists a $^*$-homomorphism $\pi: A \longrightarrow B$ such that
	$$
	\left\| \varphi - \pi \right\| < \delta.
	$$
\end{lemma}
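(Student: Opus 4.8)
The plan is to apply the order zero structure theorem (Theorem \ref{order zero}) and then replace the coefficient $h=\varphi(1_A)$ by a nearby projection through continuous functional calculus. Since $A$ is unital and $\varphi$ is a CPC order zero map, Theorem \ref{order zero} provides $C:=C^*(\varphi(A))$, a positive contraction $h=\varphi(1_A)\in\mathcal{M}(C)\cap C'$ (contractive because $\|h\|=\|\varphi\|\le 1$), and a $^*$-homomorphism $\rho:A\longrightarrow\mathcal{M}(C)\cap\{h\}'$ such that $\varphi(a)=h\rho(a)$ for all $a\in A$. The hypothesis $\|h-h^2\|<\gamma$ says exactly that $h$ is close to a projection, and I would convert this into information about $\mathrm{sp}(h)$.

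First I would note that $\|h-h^2\|=\sup_{t\in\mathrm{sp}(h)}t(1-t)<\gamma$, so every $t\in\mathrm{sp}(h)$ with $t\le 1/2$ satisfies $t<2\gamma$ and every $t\ge 1/2$ satisfies $1-t<2\gamma$. Hence, for $\gamma<1/4$, the spectrum splits as $\mathrm{sp}(h)\subset[0,2\gamma)\cup(1-2\gamma,1]$ with $1/2\notin\mathrm{sp}(h)$. I would then fix a continuous $g\colon[0,1]\to[0,1]$ that vanishes on $[0,2\gamma]$ and equals $1$ on $[1-2\gamma,1]$, and set $p:=g(h)$. On $\mathrm{sp}(h)$ the function $g$ takes only the values $0$ and $1$, so $p$ is a projection, and continuous functional calculus gives
\[
\|h-p\|=\sup_{t\in\mathrm{sp}(h)}|t-g(t)|<2\gamma .
\]
Because $g(0)=0$ we have $p\in C^*(h)\subseteq C$, and because $\rho(A)\subset\{h\}'$ the projection $p$ commutes with every $\rho(a)$.

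Finally I would define $\pi(a):=p\rho(a)$. This is a $^*$-homomorphism: multiplicativity follows from $p^2=p$ together with $p\rho(a)=\rho(a)p$, and the $^*$-property from $p=p^*$ and the same commutation. Moreover $\pi$ actually maps into $B$, since $p\in C$ and $C$ is an ideal in $\mathcal{M}(C)$, whence $\pi(a)=p\rho(a)\in C\subseteq B$. For the estimate, using $\|\rho(a)\|\le\|a\|$,
\[
\|\varphi(a)-\pi(a)\|=\|(h-p)\rho(a)\|\le\|h-p\|\,\|a\|<2\gamma\,\|a\|,
\]
so that $\|\varphi-\pi\|\le 2\gamma$. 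Choosing $\gamma:=\min\{\delta/2,\,1/4\}$ then forces $\|\varphi-\pi\|<\delta$. The routine parts are the spectral estimate and the functional calculus bound; the main point requiring care is that $\pi$ lands in $B$ rather than merely in $\mathcal{M}(C)$, which is exactly what the conditions $g(0)=0$ and ``$C$ is an ideal in $\mathcal{M}(C)$'' secure, and the verification that $\pi$ is genuinely multiplicative, which rests on the commutation $p\rho(a)=\rho(a)p$ inherited from $\rho(A)\subset\{h\}'$. The substantive content, the order zero structure theorem, is already available as Theorem \ref{order zero}.
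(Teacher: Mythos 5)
Your proposal is correct and follows essentially the same route as the paper: invoke the order zero structure theorem to write $\varphi(a)=h\rho(a)$ with $h=\varphi(1_A)$, replace $h$ by a nearby projection $p\in C^*(\varphi(1_A))$, and set $\pi(a)=p\rho(a)$. The only difference is that you carry out by hand (via the spectral gap and functional calculus, and the direct multiplicativity check using $p\rho(a)=\rho(a)p$) the two steps the paper delegates to cited propositions from \cite{cov-dim}, namely the existence of the projection and the fact that an order zero map sending $1_A$ to a projection is a $^*$-homomorphism.
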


\begin{proof}
	Consider $\gamma < \min\{ \varepsilon/2 , 1/4 \}$. Then by \cite[Proposition 2.17]{cov-dim} there exists a projection $p \in C^*\left(\varphi\left(1_A\right)\right)$ such that $\|p - \varphi\left(1_A\right)\| < \varepsilon$. By Theorem \ref{order zero}, there exists a $^*$-homomorphism $\rho: A \longrightarrow \mathcal{M}\left(C^*\left(\varphi\left(A\right)\right)\right)\cap \{\varphi\left(1_A\right)\}'$ such that $\varphi\left(a\right) = \varphi\left(1_A\right)\rho\left(a\right)$ for all $a \in A$. Set $\pi: A \longrightarrow B$ as $\pi\left(a\right)= \rho \left(a\right) p$. 
	As $p \in C^*\left( \varphi\left(1_A\right) \right) \subset \rho \left(A\right)'$, this defines an order zero map with $\pi\left(1_A\right)=p$ and 
	\begin{equation}
	\left\| \varphi - \pi \right\| \leq \left\| \varphi\left(1_A\right) - p \right\| < \varepsilon.
	\end{equation}
	Finally, by \cite[Proposition 3.2 (b)]{cov-dim} $\pi$ is a $^*$-homomorphism.
\end{proof}

Given a sequence of $C^*$-algebras $\left\{ A_n \right\}_{n \in \mathbb{N}}$, set
\begin{equation}
\ell^\infty \left( \left\{ A_n \right\}_{n \in \mathbb{N}} \right) = \left\{ \left(a_n\right)_{n \in \mathbb{N}} \; \middle| \; a_n \in A_n\,,\; \sup_{n \in \mathbb{N}} \left\| a_n \right\| < \infty \right\}. 
\end{equation}

\begin{definition}
Let $\left\{ A_n \right\}_{n \in \mathbb{N}}$ be a sequence of $C^*$-algebras and $\mathcal{U}$ a filter on $\mathbb{N}$. We define the sequence algebra of $\left\{ A_n \right\}_{n \in \mathbb{N}}$ as
\begin{equation}
\prod_{n \rightarrow\infty} A_n = \ell^\infty \left( \left\{ A_n \right\}_{n \in \mathbb{N}} \right) \Big/  \left\{ \left(a_n\right)_{n \in \mathbb{N}} \in \ell^\infty \left( \left\{ A_n \right\}_{n \in \mathbb{N}} \right) \; \middle| \; \lim\limits_{n \rightarrow \infty} \|a_n\| = 0 \right\}.
\end{equation}
We also define $\prod\limits_{n \rightarrow \mathcal{U}} A_n $ as
\begin{equation}
\prod_{n \rightarrow \mathcal{U}} A_n = \ell^\infty \left( \left\{ A_n \right\}_{n \in \mathbb{N}} \right) \Big/ \left\{ \left(a_n\right)_{n \in \mathbb{N}} \in \ell^\infty \left( \left\{ A_n \right\}_{n \in \mathbb{N}} \right) \; \middle| \; \lim\limits_{n \rightarrow \mathcal{U}} \|a_n\| = 0 \right\}.
\end{equation}
\end{definition}

\noindent We will omit the $n$ when there is no risk of confusion. If $A$ is a $C^*$-algebra and $A_n = A$ for all $n \in \mathbb{N}$, we denote them as $A_\infty$ and $A_\mathcal{U}$.  When $\mathcal{U}$ is an ultrafilter, $\prod\limits_{\mathcal{U}} A_n$ is called an ultraproduct and $A_\mathcal{U}$ an ultrapower.
We can identify $A$ as a subalgebra of $A_\mathcal{U}$ via the canonical embedding as constant sequences.

Consider a $C^*$-algebra $A$, a finite subset $\mathfrak{F}\subset A$ and $\varepsilon > 0$. 
A CPC approximation for $\mathfrak{F}$ within $\varepsilon$ is an ordered triple $\left(F, \psi, \varphi \right)$ where $\psi: A \longrightarrow F$ and $\varphi: F \longrightarrow A$ are CPC maps and $F$ is a finite dimensional $C^*$-algebra satisfying $\|a - \varphi \psi (a) \|< \varepsilon$ for all $a \in \mathfrak{F}$. 

A system of CPC approximations for $A$ will be a net of CPC approximations $\left(F^{(r)}, \psi^{(r)}, \varphi^{(r)} \right)$ 
converging to $\mbox{id}_A$ in the point-norm topology. It will be denoted as $\left\{ \left(F^{(r)} , \psi^{(r)}, \varphi^{(r)} \right) \right\}_{r \in I}$. If $A$ is separable, it is enough to consider a sequence of CPC approximations. 
The proof of the following lemma is contained in the proof of \cite[Lemma 3.7]{Kir-Win}.

\begin{lemma} \label{missing lemma}
	Let $A$ be a separable nuclear $C^*$-algebra. Let $\left\{ \left(F^{(r)}, \psi^{(r)}, \varphi^{(r)}\right) \right\}_{r \in \mathbb{N}}$ be a system of CPC approximations for $A$ with $F^{(r)}$ finite dimensional. Suppose $0<\varepsilon\leq 1$ is given and let $\mathfrak{F} \subset A_+$ be a finite subset. 
	Then there exists $r \in \mathbb{N}$ and a projection $p \in F^{(r)}$ such that
	\begin{align}
		\left\| \varphi^{(r)} \psi^{(r)}(a)  - a \right\| < \varepsilon
	\end{align}	
	 and
	 \begin{align}
		\left\| \varphi^{(r)} \left(p \psi^{(r)}(a) p \right) - a \right\| < \varepsilon
	\end{align}
	for all $a \in \mathfrak{F}$. Moreover, if $F^{(r)}= \bigoplus\limits_{k=1}^n F^{(r)}_k$ and $p_k = p 1_{F_k}$ then
	\begin{equation}
	\left\| \varphi^{(r)}\left(p_k\right) - \varphi^{(r)}\left(p_k\right)\varphi^{(r)}\left( 1_{F^{(r)}} \right) \right\| < \varepsilon 
	\end{equation}
	for $k=1, \cdots , n$.
\end{lemma}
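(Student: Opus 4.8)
The plan is to treat the three estimates separately, getting the first two for all sufficiently large $r$ by soft arguments and reserving the sequence algebra for the last one. Since $\mathfrak{F}$ is finite, point--norm convergence of $\varphi^{(r)}\psi^{(r)}$ to $\mathrm{id}_A$ gives the first inequality once $r$ is large. For the projection I would fix $c\in A_+^1$ with $a\leq Mc$ for every $a\in\mathfrak{F}$ and some $M>0$ (a suitable multiple of $\sum_{a\in\mathfrak{F}}a$), and, for a threshold $\delta>0$ to be chosen last, set $p=p^{(r)}:=\chi_{[\delta,1]}\big(\psi^{(r)}(c)\big)\in F^{(r)}$, a genuine projection because $F^{(r)}$ is finite dimensional. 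Since $(1-p)\psi^{(r)}(c)(1-p)$ has norm at most $\delta$ and $0\leq\psi^{(r)}(a)\leq M\psi^{(r)}(c)$, a functional calculus estimate gives $\|\psi^{(r)}(a)-p\psi^{(r)}(a)p\|\leq 2M\sqrt{\delta}$ for all $a\in\mathfrak{F}$, \emph{uniformly} in $r$; as $\varphi^{(r)}$ is contractive, the second inequality then follows from the first once $\delta$ is small. Thus the first two estimates hold simultaneously for all small $\delta$ and all large $r$.

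The real content is the third estimate, and here the obstacle is that $\varphi^{(r)}(1_{F^{(r)}})$ need not be close to a projection, so the crude Cauchy--Schwarz bound for $\|\varphi^{(r)}(p_k)-\varphi^{(r)}(p_k)\varphi^{(r)}(1_{F^{(r)}})\|$ is useless. Instead I would pass to the sequence algebra $F_\infty=\prod_{r\to\infty}F^{(r)}$ and the CPC maps $\Psi\colon A\to F_\infty$, $\Psi(a)=(\psi^{(r)}(a))_r$, and $\Phi\colon F_\infty\to A_\infty$, $\Phi((x_r)_r)=(\varphi^{(r)}(x_r))_r$. Point--norm convergence yields $\Phi\Psi=\iota$, the canonical (isometric) $^*$-homomorphism $A\hookrightarrow A_\infty$. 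Combining the Schwarz inequalities for the contractive maps $\Psi$ and $\Phi$ forces $\iota(c^*c)=\Phi(\Psi(c))^*\Phi(\Psi(c))\leq \Phi(\Psi(c)^*\Psi(c))\leq\Phi(\Psi(c^*c))=\iota(c^*c)$, so equality holds throughout and $\Psi(c)$ lies in the multiplicative domain of $\Phi$ (in the sense of Choi). The multiplicative domain relations then give $\iota(c)=\Phi(1_{F_\infty}\Psi(c))=\Phi(1_{F_\infty})\Phi(\Psi(c))=Y\iota(c)$, where $Y:=\Phi(1_{F_\infty})$, and symmetrically $\iota(c)Y=\iota(c)$; hence $Y$ acts as a two-sided unit on the hereditary subalgebra of $A_\infty$ generated by $\iota(c)$.

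Finally I would feed the projection into this structure. By construction $p\leq\delta^{-1}\psi^{(r)}(c)$, and $p_k\leq p$, so $0\leq p_k\leq\delta^{-1}\psi^{(r)}(c)$ and therefore $0\leq\varphi^{(r)}(p_k)\leq\delta^{-1}\varphi^{(r)}(\psi^{(r)}(c))$; in the limit $\Phi((p_k)_r)\leq\delta^{-1}\iota(c)$ lies in the hereditary subalgebra generated by $\iota(c)$, whence $\Phi((p_k)_r)=\Phi((p_k)_r)\,Y$ exactly. To descend to a single finite stage I would establish the \emph{uniform} statement that $\|\varphi^{(r)}(x)-\varphi^{(r)}(x)\varphi^{(r)}(1_{F^{(r)}})\|<\varepsilon$ for every $x\in F^{(r)}_+$ with $x\leq\delta^{-1}\psi^{(r)}(c)$, arguing by contradiction: a sequence of counterexamples $x_r$ would assemble, along a suitable ultrafilter $\mathcal{U}$, into $X\in F_\mathcal{U}$ with $0\leq X\leq\delta^{-1}\Psi(c)$ and $\|\Phi_\mathcal{U}(X)-\Phi_\mathcal{U}(X)Y_\mathcal{U}\|\geq\varepsilon$, contradicting the computation above (valid verbatim in the ultrapower). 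This uniform bound applies to all the $x=p_k$ at once, and intersecting with the range of $r$ for which the first two estimates hold produces the required single $r$ and projection $p$.

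I expect the main difficulty to be exactly this last localisation. One must recognise that $\varphi^{(r)}(1_{F^{(r)}})$ behaves like a unit only on the part of $A$ detected by $\iota(c)$, that the multiplicative domain of $\Phi$ is precisely the tool capturing this, and that the dependence of the block decomposition $F^{(r)}=\bigoplus_{k=1}^n F^{(r)}_k$ on $r$ must be absorbed by proving a statement uniform over all positive contractions dominated by $\delta^{-1}\psi^{(r)}(c)$, rather than for the individual $p_k$.
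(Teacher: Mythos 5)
Your argument is correct, and for the first two estimates it coincides with the device used in the proof the paper cites (\cite[Lemma 3.7]{Kir-Win}): dominate $\mathfrak{F}$ by a single positive contraction $c$ and cut by the spectral projection $p=\chi_{[\delta,1]}\bigl(\psi^{(r)}(c)\bigr)$, so that $\|\psi^{(r)}(a)-p\psi^{(r)}(a)p\|=O(\sqrt{\delta})$ uniformly in $r$. Where you genuinely diverge is the third estimate. The cited proof stays at finite stages: it adjoins to the approximated set an element $d\in A_+^1$ acting as an approximate unit on $c$, and uses $\varphi^{(r)}(p_k)\leq\delta^{-1}\varphi^{(r)}\psi^{(r)}(c)\approx\delta^{-1}c$ together with $c^{1/2}\varphi^{(r)}(1_{F^{(r)}})c^{1/2}\geq c^{1/2}\varphi^{(r)}\psi^{(r)}(d)c^{1/2}\approx c^{1/2}dc^{1/2}\approx c$ to bound $\|\varphi^{(r)}(p_k)(1-\varphi^{(r)}(1_{F^{(r)}}))\|$ with explicit constants. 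You instead pass to the sequence algebra, use the Kadison--Schwarz/Choi multiplicative-domain argument to show $\Phi(1_{F_\infty})$ is a unit on $\iota(A)$ (hence on $\mathrm{her}(\iota(c))$, which contains $\Phi((p_k)_r)$ by the same hereditary-subalgebra fact the paper quotes from Blackadar), and then localise by an ultrafilter compactness argument made uniform over all positive $x\leq\delta^{-1}\psi^{(r)}(c)$ so that the $r$-dependence of the block decomposition is irrelevant. Both routes work; yours trades explicit constants for a contradiction step but is arguably more in the spirit of the rest of the paper, which already manipulates $\varphi^{(\mathcal{U})}$, $\psi^{(\mathcal{U})}$ and hereditary subalgebras in exactly this way, while the finite-stage argument yields quantitative control and avoids any appeal to ultrafilters. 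Your diagnosis that the only real content is the third inequality, and that the naive Cauchy--Schwarz bound is useless there, is accurate.
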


\section{The Main Result}

We will now proceed to prove the main theorem. We will split the proof in two steps. Firstly, we show that the order zero maps appearing in the convex combinations can be replaced by $^*$-homomorphisms, and secondly, by approximating twice in a suitable way, we use these to obtain the finite dimensional approximations. 

The following lemma will be given in greater generality than is needed for the proof of the main theorem of this section in order to also use it in the next section. Since we do not need any special feature of ultrafilters, it will be enough to work with free filters. This will allow us later, to apply the following lemma in the situation of nuclear dimension at most $\omega$ where we will work with the sequence algebra.

\begin{lemma}\label{lemma 2 omega}
Let $A$ be a separable $C^*$-algebra, $\mathcal{U}$ a free filter on $\mathbb{N}$ containing the cofinite filter. Let $\left(\lambda_k\right)_{k \in \mathbb{N}}$ be a sequence contained in $[0,1]$ such that $\sum\limits_{k=1}^{\infty}\lambda_k = 1$ and let $\left\{ a_n \right\}_{n \in \mathbb{N}}$ be a dense countable subset of $A$. Suppose $A$ has a system of CPC approximations $\left\{ \left(F^{(r)}, \psi^{(r)}, \varphi^{(r)} \right) \right\}_{r \in \mathbb{R}}$ satisfying the following conditions:
\begin{enumerate}[(a)]

\item For every $r \in \mathbb{N}$ there exist $n^{(r)} \in \mathbb{N}$ with $n^{(r)} \leq m$, 
a decomposition $F^{(r)} = \bigoplus\limits_{k=1}^{n^{(r)}} F_k^{(r)}$, as internal direct sum, and
a family $\left\{\varphi_k^{(r)}:F^{(r)} \longrightarrow A : k \in \mathbb{N} \right\}$ of contractive order zero maps such that $\varphi_k^{(r)} = 0$ if $k > n^{(r)}$ satisfying
\begin{equation}
\varphi^{(r)} = \sum\limits_{k=1}^{n^{(r)}} \lambda_k \varphi_k^{(r)}.
\end{equation}
Moreover, $\bigoplus\limits_{i \neq k} F_i^{(r)} \subset \ker \varphi_k$.

\item For every $r \in \mathbb{N}$ there exist projections $p_k^{(r)} \in F_k^{(r)}$ satisfying 

\begin{enumerate}[(I)]

\item $\left\| \varphi^{(r)} \psi^{(r)} (a_n) - a_n \right\| < r^{-1} $ for $n \leq r$.

\item $\left\| \varphi^{(r)}\left(p^{(r)} \psi^{(r)} \left(a_n\right) p^{(r)}\right) - a_n \right\| < r^{-1} $ for $n \leq r$ with $p^{(r)}= \sum\limits_{k=1}^{n^{(r)}} p_k^{(r)}$. \label{(I)}

\item $\left\| \varphi^{(r)} \left(p_k^{(r)}\right) - \varphi^{(r)}\left(p_k^{(r)}\right)\varphi^{(r)}\left(1_{F^{(r)}}\right) \right\| < r^{-1}$ where $1_{F^{(r)}}$ denotes the unit of $F^{(r)}$. \label{(II)}
	
\end{enumerate} 


\end{enumerate}
 
Then for every finite subset $\mathfrak{F}\subset A$ and every $\varepsilon>0$ there exist $N \in \mathbb{N}$ and a CPC approximation $\left( \bigoplus\limits_{k=1}^N \widetilde{F}_k, \psi, \pi \right)$ for $\mathfrak{F}$ within $\varepsilon$
such that $\pi = \sum\limits_{k=1}^{N}\lambda_k \pi_k$ with each $\pi_k:\bigoplus\limits_{k=1}^N \widetilde{F}_k \longrightarrow A$ a $^*$-homomorphism satisfying $\bigoplus\limits_{i \neq k} \widetilde{F}_i \subset \ker \pi_k$

\end{lemma}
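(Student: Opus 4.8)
The plan is to pass to the sequence algebra $\prod_{n\to\mathcal U}A$, assemble $(\varphi^{(r)})$, $(\varphi_k^{(r)})$ and $(\psi^{(r)})$ into limit maps, extract the structural consequences of (I)--(III) there, and then read everything back at a single finite stage $r$. Write $\Phi,\Phi_k\colon\prod_{\mathcal U}F^{(r)}\to A_{\mathcal U}$ and $\Psi\colon A\to\prod_{\mathcal U}F^{(r)}$ for the maps induced by $(\varphi^{(r)})$, $(\varphi_k^{(r)})$ and $(\psi^{(r)})$. Each is completely positive contractive, each $\Phi_k$ is order zero (a pointwise limit of order zero maps), and by (a) one has $\Phi=\sum_k\lambda_k\Phi_k$ with $\Phi_k$ annihilating the off-diagonal blocks. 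Condition (I) gives $\Phi\Psi=\mathrm{id}_A$ on a dense set, hence everywhere; condition (II) gives $\Phi\circ(P\Psi(\cdot)P)=\mathrm{id}_A$, where $P=[(p^{(r)})]=\sum_kP_k$ with $P_k=[(p_k^{(r)})]$ projections. Since $\Phi_k$ kills $\bigoplus_{i\neq k}F_i^{(r)}$, the latter identity rewrites as $\mathrm{id}_A=\sum_k\lambda_k\Theta_k$, where $\Theta_k:=\Phi_k(P_k\Psi(\cdot)P_k)$ is completely positive contractive.

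The heart of the argument, and the step where the convex combination hypothesis is indispensable, is to show that each summand $\Theta_k$ is already the identity. Here $\mathrm{id}_A$ (the embedding $A\hookrightarrow A_{\mathcal U}$) is a $^*$-homomorphism, and a $^*$-homomorphism cannot be a nontrivial convex combination of completely positive contractions unless all of them coincide with it: for self-adjoint $a$ the Kadison--Schwarz inequality gives $\Theta_k(a)^2\le\Theta_k(a^2)$, while the operator convexity of $t\mapsto t^2$ gives $\sum_k\lambda_k\Theta_k(a)^2\ge(\sum_k\lambda_k\Theta_k(a))^2=a^2=\sum_k\lambda_k\Theta_k(a^2)$, forcing equality throughout and hence $\Theta_k(a)=a$ and $\Theta_k(a^2)=\Theta_k(a)^2$. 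This is the map-level incarnation of Lemma \ref{multiplier lemma} (the unit of a multiplier algebra is an extreme point of its unit ball). Consequently each $\Theta_k=\mathrm{id}_A$, so $\Psi_k:=P_k\Psi(\cdot)P_k$ is an injective $^*$-homomorphism and $\Phi_k$ restricts to its inverse on $\overline{\Psi_k(A)}$; in particular each $\Phi_k$ is multiplicative on the relevant corner.

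It remains to convert this limiting multiplicativity into honest finite-dimensional $^*$-homomorphisms, and this is where I expect the real work to lie. To apply Lemma \ref{pi - varphi} to the compressed order zero maps I must know that their unit images are genuinely close to projections \emph{in norm}, not merely modulo the annihilator of the image subalgebra. For this I would use condition (III): in the limit $\Phi(P_k)=\Phi(P_k)\Phi(1_{F})$, and the aim is to show, after summing and invoking Lemma \ref{multiplier lemma} with $B$ the image subalgebra, that the positive contraction $\varphi^{(r)}(p^{(r)})=\sum_k\lambda_k\varphi_k^{(r)}(p_k^{(r)})$ lies within a distance of a projection that vanishes along $\mathcal U$. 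Lemma \ref{lemma 3 omega}, applied to this convex combination, would then distribute the estimate to each summand, yielding $\bigl\|\varphi_k^{(r)}(p_k^{(r)})-(\varphi_k^{(r)}(p_k^{(r)}))^2\bigr\|\to 0$ along $\mathcal U$. Hence for $\mathcal U$-large $r$ the compression of $\varphi_k^{(r)}$ to $\widetilde F_k:=p_k^{(r)}F_k^{(r)}p_k^{(r)}$ satisfies the hypothesis of Lemma \ref{pi - varphi}, producing $^*$-homomorphisms $\pi_k$ with $\|\varphi_k^{(r)}|_{\widetilde F_k}-\pi_k\|$ arbitrarily small.

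Finally I would assemble the approximation. Given a finite $\mathfrak F\subset A$ and $\varepsilon>0$, fix a single index $r$ (chosen along $\mathcal U$ so that all the preceding estimates hold simultaneously with room to spare), and set $N=n^{(r)}$, $\widetilde F_k=p_k^{(r)}F_k^{(r)}p_k^{(r)}$, $\psi(a)=p^{(r)}\psi^{(r)}(a)p^{(r)}\in\bigoplus_k\widetilde F_k$, and $\pi=\sum_k\lambda_k\pi_k$. Then $\psi$ and $\pi$ are completely positive contractive, the relation $\bigoplus_{i\neq k}\widetilde F_i\subset\ker\pi_k$ is inherited from $\bigoplus_{i\neq k}F_i^{(r)}\subset\ker\varphi_k^{(r)}$, and combining condition (II) with $\|\varphi_k^{(r)}|_{\widetilde F_k}-\pi_k\|$ small yields $\|\pi\psi(a)-a\|<\varepsilon$ for all $a\in\mathfrak F$, as required. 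The main obstacle throughout is the passage from the clean limiting statements to the norm estimates needed at a finite stage, namely the verification that the compressed unit images are truly close to projections; this is precisely the point at which conditions (II), (III) and the \emph{bounded} convex combination, via Lemmas \ref{multiplier lemma} and \ref{lemma 3 omega}, must be used together.
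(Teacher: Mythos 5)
Your overall architecture --- pass to $\prod_{\mathcal U}F^{(r)}$, show the compressed order zero maps are close to $^*$-homomorphisms via Lemma \ref{pi - varphi}, and read everything back at a single $\mathcal U$-large stage --- is the paper's, and you correctly locate the crux: proving that $\varphi_k^{(r)}(p_k^{(r)})$ is approximately a projection. But the step you defer as ``the real work'' is precisely where your sketch breaks down. You propose to show that $\varphi^{(\mathcal U)}(p)=\sum_k\lambda_k\varphi_k^{(\mathcal U)}(p_k)$ is (asymptotically) a projection and then invoke Lemma \ref{lemma 3 omega} to distribute the estimate to each summand. That intermediate statement is never established and is false in general: the paper proves that each \emph{individual} $\varphi_k^{(\mathcal U)}(p_k)$ is a projection, but these projections need not coincide for different $k$ when $A$ is non-unital (each merely dominates an approximate unit of $A$ inside $A_{\mathcal U}$, and distinct positive contractions of $A_{\mathcal U}$ can do that), so their nontrivial convex combination is typically not a projection and Lemma \ref{lemma 3 omega} has nothing to bite on --- in the paper that lemma is used only later, in Theorem \ref{main theo}, to replace the convex combination of $^*$-homomorphisms by a single one. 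The argument you are missing is: fix $k$ with $\lambda_k\neq 0$, set $q_k=\varphi_k^{(\mathcal U)}(p_k)$ and $B=\overline{q_kA_{\mathcal U}q_k}$; condition (III) in the limit gives $\varphi^{(\mathcal U)}(1)b=b$ for all $b\in B$; the structure theorem for order zero maps (Theorem \ref{order zero}) shows $\varphi_k^{(\mathcal U)}(1)b\in B$ for $b\in B$ --- this hypothesis of Lemma \ref{multiplier lemma} is not automatic and you never address it; Lemma \ref{multiplier lemma} applied with $a_1=\varphi_k^{(\mathcal U)}(1)$ and $a_2=\frac{1}{1-\lambda_k}\sum_{j\neq k}\lambda_j\varphi_j^{(\mathcal U)}(1)$ then yields $\varphi_k^{(\mathcal U)}(1)q_k=q_k$, and the order zero identity $q_k\varphi_k^{(\mathcal U)}(1-p_k)=0$ gives $q_k^2=q_k\varphi_k^{(\mathcal U)}(1)=q_k$.

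Two further points. Your extreme-point argument that each $\Theta_k=\mathrm{id}_A$ is correct and rather elegant, but it is not load-bearing: it only shows that $\varphi_k^{(\mathcal U)}(p_k)$ dominates an approximate unit of $A$, which does not make it a projection; and the ensuing claim that $P_k\Psi(\cdot)P_k$ is a $^*$-homomorphism does not follow from $\Phi_k\circ(P_k\Psi(\cdot)P_k)=\mathrm{id}_A$ alone (Kadison--Schwarz only places the multiplicativity defect in $\ker\Phi_k$, and the Choi--Effros product on the image of the idempotent need not agree with the ambient one). Finally, since the $\lambda_k$ form an infinite sequence, the paper truncates at an $N$ with $\sum_{k>N}\lambda_k<\varepsilon/3$ and estimates the tail $\sum_{k>N}\lambda_k\varphi_k^{(r)}$ separately; your choice $N=n^{(r)}$ skips this bookkeeping.
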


\begin{proof}
	Let $\mathfrak{F} \subset A$ and $\varepsilon >0$. Without losing generality we can assume the elements of $\mathfrak{F}$ are in the dense subset $\left\{a_n \right\}$ and are positive contractions. Consider $\gamma$ given by Lemma \ref{pi - varphi} using $\delta = \varepsilon/3$. Since $\sum\limits_{k=1}^{\infty}\lambda_k = 1$ there exists $N \in \mathbb{N}$ such that
	\begin{equation}
	\sum\limits_{k>N}^{\infty} \lambda_k < \frac{\varepsilon}{3}. \label{reduction series}
	\end{equation}
	We will show below that $\varphi_k^{(\mathcal{U})} \left(p_k\right)$ is a projection for every $k$ with $\lambda_k \neq 0$, where $p_k \in \prod\limits_{\mathcal{U}} F^{(r)}$ is represented by $\left(p_k^{(r)}\right)_{r \in \mathbb{N}}$. Once this is done, for any $k \in \mathbb{N}$, there exists $U_k \in \mathcal{U}$ such that
	\begin{equation}\label{gamma}
	\left\| \varphi_k^{(r)} \left( p_k^{(r)} \right)-\varphi_k^{(r)} \left( p_k^{(r)} \right)^2 \right\| < \gamma
	\end{equation}
	for all $r \in U_k$.
	Similarly, since $\lim\limits_{r \rightarrow \mathcal{U}} \varphi^{(r)} \psi^{(r)} \left(p^{(r)} a_n p^{(r)}\right) = a_n$ for all $n \in \mathbb{N}$, there exists $V \in \mathcal{U}$ such that
	\begin{equation}
	\left\| a - \varphi^{(r)}\left( p^{(r)} \psi^{(r)} \left( a \right) p^{(r)} \right) \right\| < \varepsilon/3 \label{x - phi psi (p x p)}
	\end{equation}
	for all $r \in V$ and for all $a \in \mathfrak{F}$.
	
	
	Fix $r \in U_1 \cap \cdots U_N \cap V$ and set $\widetilde{F}_k = p_k^{(r)}F^{(r)} p_k^{(r)}$. Hence, by the choice of the constant $\gamma$ and (\ref{gamma}), there exists a $^*$-homomorphism $\pi_k: \widetilde{F}_k \longrightarrow A $ such that
	\begin{equation}
	\left\| \varphi_k^{(r)}|_{\widetilde{F}_k} - \pi_k \right\| < \frac{\varepsilon}{3} \label{reduction order zero with hom}
	\end{equation}
	for $k \leq N$. Extend $\pi_k$ to $\widetilde{F}:= \bigoplus\limits_{i=1}^{n^{(r)}} \widetilde{F_k} = p^{(r)} F^{(r)}p^{(r)} $ linearly by defining $\pi_k(x_1 \oplus \cdots \oplus x_{k-1} \oplus 0 \oplus x_{k+1} \oplus \cdots \oplus x_n)=0$ for $x_i \in \widetilde{F}_i$ with $i \neq k$.
	
	Define $\psi:A \longrightarrow \widetilde{F}$ as $\psi(a) = p^{(r)} \psi^{(r)}(a) p^{(r)}$ and set $\pi: \widetilde{F} \longrightarrow A$ as $\pi = \sum\limits_{k=1}^{N} \lambda_k \pi_k$, then $\left(\widetilde{F}, \psi, \pi \right)$ is a completely positive and contractive approximation with the required properties since, using (\ref{x - phi psi (p x p)}), (\ref{reduction order zero with hom}) and (\ref{reduction series}), we obtain
	\begin{align}
	\left\| a - \pi\psi(x) \right\| \leq& \left\| a - \varphi^{(r)}\left(p^{(r)} \psi^{(r)}(a) p^{(r)}\right) \right\| + \left\| \sum\limits_{k=1}^{N} \lambda_k \left(\varphi_k^{(r)} - \pi_k\right)\left(p^{(r)} \psi^{(r)}(a) p^{(r)} \right) \right\| \\ 
	& + \left\| \sum\limits_{k>N} \lambda_k \varphi_k^{(r)} \left( p^{(r)} \psi^{(r)}(a)p^{(r)} \right) \right\| \\
	<& \: \frac{\varepsilon}{3} + \sum_{k=1}^{N}\lambda_k \left( \frac{\varepsilon}{3} \right) + \frac{\varepsilon}{3}  < \varepsilon
	\end{align}
	for all $a \in \mathfrak{F}$.
	
	To finish the proof, we will show $\varphi_k^{(\mathcal{U})}\left( p_k \right)$ is a projection for every $k \in \mathbb{N}$ with $\lambda_k=0$. Due to the hypotheses, we have $\varphi^{(\mathcal{U})} = \sum\limits_{k=1}^{\infty} \lambda_k \varphi_k^{(\mathcal{U})} $
	and $\varphi^{(\mathcal{U})} \psi^{(\mathcal{U})} (a) = a$ for all $a \in A$. Let us remember $p_k \in \prod\limits_\mathcal{U} F^{(r)}$ is represented by $\left( p_k^{(r)} \right)_r $ and consider $p \in \prod\limits_\mathcal{U} F^{(r)}$ represented by $\left( p^{(r)} \right)_{r}$ with $p^{(r)} = \sum\limits_{k=1}^{n^{(r)}} p_k^{(r)}$, then by (\ref{(I)}) we have
	\begin{eqnarray}
	\varphi^{(\mathcal{U})}(p\psi^{(\mathcal{U})}(a)p) = a
	\end{eqnarray}
	and by (\ref{(II)})
	\begin{equation}
	\varphi^{(\mathcal{U})}(p_k)=\varphi^{(\mathcal{U})}(p_k)\varphi^{(\mathcal{U})}(1_{\prod\limits_\mathcal{U} F^{(r)}})\label{useless}
	\end{equation}
	for all $a \in A$ where $1_{\prod\limits_\mathcal{U} F^{(r)}}$ denotes the unit of $\prod\limits_\mathcal{U} F^{(r)}$. Taking adjoints in (\ref{useless}) we get 
	\begin{equation}
	\varphi_k^{(\mathcal{U})} (p_k) =\varphi_k^{(\mathcal{U})}(p_k)\varphi^{(\mathcal{U})}(1_{\prod\limits_\mathcal{U} F^{(r)}})= \varphi^{(\mathcal{U})} (1_{\prod\limits_\mathcal{U} F^{(r)}}) \varphi_k^{(\mathcal{U})}(p_k).
	\end{equation}
	
	Fix $k$ and consider $B:= \overline{\varphi_k^{(\mathcal{U})}(p_k)A_\mathcal{U} \varphi_k^{(\mathcal{U})} (p_k)}$, then we have 
	\begin{equation}
	\varphi^{(\mathcal{U})}(1_{\prod\limits_\mathcal{U} F^{(r)}})b = b \label{pseudo unit}
	\end{equation} 
	for all $b \in B$. Observe that for any free filter containing the cofinite filter, the last paragraph of the proof of \cite[Proposition 2.2]{dim-ncomparison} shows that the map $\varphi_k^{(\mathcal{U})}: \prod\limits_\mathcal{U} F^{(r)} \longrightarrow A_\mathcal{U}$ is order zero and, by the structure of order zero maps given in Theorem \ref{order zero}, we can write
	\begin{equation}
	\varphi_k^{(\mathcal{U})}(x)= \varphi_k^{(\mathcal{U})}(1_{\prod\limits_\mathcal{U} F^{(r)}}) \rho(x) = \rho(x) \varphi_k^{(\mathcal{U})}(1_{\prod\limits_\mathcal{U} F^{(r)}}),
	\end{equation}
	for a $^*$-homomorphism $\rho: \prod\limits_{\mathcal{U}} F^{(r)} \longrightarrow  \mathcal{M}\left(C^*\left( \varphi_k^{(\mathcal{U})}\left( \prod\limits_\mathcal{U} F^{(r)} \right) \right)\right) \huge\cap \left\{ \varphi_k^{(\mathcal{U})}(1_{\prod\limits_\mathcal{U} F^{(r)}}) \right\}'$. Thus
	\begin{eqnarray}
	\varphi_k^{(\mathcal{U})}(1_{\prod\limits_\mathcal{U} F^{(r)}})\varphi_k^{(\mathcal{U})}(p_k) &=& \varphi_k^{(\mathcal{U})}(	1_{\prod\limits_\mathcal{U} F^{(r)}})^2 \rho(p_k) = \rho(p_k)\varphi_k^{(\mathcal{U})}(1_{\prod\limits_\mathcal{U} F^{(r)}})^2 \\
	&=& \varphi_k^{(\mathcal{U})}(p_k) \varphi_k^{(\mathcal{U})}(1_{\prod\limits_\mathcal{U} F^{(r)}}).
	\end{eqnarray}
	Using this we obtain
	\begin{eqnarray}
	\varphi_k^{(\mathcal{U})}(1_{\prod\limits_\mathcal{U} F^{(r)}})\varphi_k^{(\mathcal{U})}(p_k) x \varphi_k^{(\mathcal{U})}(p_k) = \varphi_k^{(\mathcal{U})}(p_k)\varphi_k^{(\mathcal{U})}(1_{\prod\limits_\mathcal{U} F^{(r)}})x \varphi_k^{(\mathcal{U})}(p_k) \in B
	\end{eqnarray}
	for any $x \in A_\mathcal{U}$. Thus $\varphi_k^{(\mathcal{U})}(1_{\prod\limits_\mathcal{U} F^{(r)}}) b \in B$
	for all $b \in B$. Set
	\begin{equation}
	h = \frac{1}{1-\lambda_k}\sum_{j\neq k} \lambda_j \varphi_j^{(\mathcal{U})}(1_{\prod\limits_\mathcal{U} F^{(r)}}).
	\end{equation}
	By construction $h$ is a positive contraction and 
	\begin{equation}
	\varphi^{(\mathcal{U})}(1_{\prod\limits_\mathcal{U} F^{(r)}}) = \lambda_k \varphi_k^{(\mathcal{U})} (1_{\prod\limits_\mathcal{U} F^{(r)}}) + (1-\lambda_k)h.
	\end{equation}
	By Lemma \ref{multiplier lemma} and (\ref{pseudo unit}) we have
	\begin{equation}
	\varphi_k^{(\mathcal{U})}(1_{\prod\limits_\mathcal{U} F^{(r)}})b = b
	\end{equation}
	for all $b \in B$. By \cite[Proposition II.3.4.2 (ii)]{blackadar} $\varphi_k^{(\mathcal{U})}(p_k)$ is in $B$, so in particular we obtain
	\begin{equation}
	\varphi_k^{(\mathcal{U})}(p_k) = \varphi_k^{(\mathcal{U})}(1_{\prod\limits_\mathcal{U} F^{(r)}}) \varphi_k^{(\mathcal{U})}(p_k).
	\end{equation}
	Using the last identity and the fact that $\varphi_k^{(\mathcal{U})}$ is order zero, we obtain
	\begin{eqnarray}
	0 &=& \varphi_k^{(\mathcal{U})}(p_k)\varphi_k^{(\mathcal{U})}(1_{\prod\limits_\mathcal{U} F^{(r)}}-p_k) = \varphi_k^{(\mathcal{U})}(p_k)\varphi_k^{(\mathcal{U})}(1_{\prod\limits_\mathcal{U} F^{(r)}}) - \varphi_k^{(\mathcal{U})}(p_k)^2 \\
	& = & \varphi_k^{(\mathcal{U})}(p_k) - \varphi_k^{(\mathcal{U})}(p_k)^2
	\end{eqnarray}
	which means that $\varphi_k^{(\mathcal{U})}(p_k)$ is a projection as required.
\end{proof}

\begin{remark} \label{finite case of lemma}
	If there exists $m \in \mathbb{N}$ such that $\lambda_k = 0$ for $k > m$, then we can take $N=m$. 
\end{remark}

\begin{remark} \label{direct sum disjoint supports}
	Let $A$ be a $C^*$-algebra and let $\mathfrak{F}$ be a finite subset of $A$ and $\varepsilon >0$.
	Suppose there exists a CPC approximation $\left(F, \psi, \varphi \right)$ for $\mathfrak{F}$ within $\varepsilon$ with $\varphi=\sum\limits_{k=1}^{n} \lambda_k \varphi_k$ for some order zero maps $\varphi_k: F \longrightarrow A$ and coefficients $\lambda_k > 0$ such that $\sum\limits_{k=1}^{n} \lambda_k =1$. Set $F_k = F$ for $k=1, \cdots, n$ and define CPC maps $\widetilde{\psi}: A \longrightarrow \bigoplus\limits_{k=1}^n F_k$, $\widetilde{\varphi}:\bigoplus\limits_{k=1}^n F_k \longrightarrow A$ as $\widetilde{\psi}(a)=\psi(a)\oplus \cdots \oplus \psi(a)$ and $\widetilde{\varphi}(x_1 \oplus \cdots \oplus x_n) = \sum\limits_{k=1}^{n} \lambda_k \varphi_k\left(x_k\right)$. Since $\varphi \psi (a) = \widetilde{\varphi}\widetilde{\psi}(a)$ for all $a \in A$, $\left(\bigoplus\limits_{k=1}^n F_k, \widetilde{\psi}, \widetilde{\varphi} \right)$ is a CPC approximation for $\mathfrak{F}$ within $\varepsilon$; moreover, 
	for each $k$ the kernel of $\varphi_k$ contains $\bigoplus\limits_{i \neq k} F_i$.
\end{remark}

The following theorem is the main result of this work.

\begin{theorem}\label{main theo}
Let $A$ be a $C^*$-algebra. Suppose there exists $n \in \mathbb{N}$ such that for every finite subset $\mathfrak{F} \subset A$ and every $\varepsilon>0$ there exist CPC maps $\psi:A \longrightarrow F, \; \varphi: F \longrightarrow A$ where $F$ is a finite dimensional $C^*$-algebra and $\varphi$ is a convex combination of $n$ contractive order zero maps such that
\begin{equation}
\|a - \varphi \psi (a) \| < \varepsilon
\end{equation}
for all $a \in \mathfrak{F}$. Then $A$ is AF.
\end{theorem}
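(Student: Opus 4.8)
The plan is to reduce to the separable setting, invoke Lemma~\ref{lemma 2 omega} to replace the order zero summands by $^*$-homomorphisms, and then apply the resulting conclusion \emph{twice} in order to collapse the convex combination onto a single finite dimensional subalgebra. First I would reduce to the case that $A$ is separable. Since the conclusion is local, one fixes $\mathfrak{F}$ and $\varepsilon$ and builds a separable subalgebra $A_0 \subseteq A$ containing $\mathfrak{F}$ by the standard inductive procedure: at each stage adjoin the (finite dimensional, hence separable) ranges of the maps $\varphi$, the order zero summands $\varphi_k$, and the restrictions of $\psi$ supplied by the hypothesis for a countable dense family of finite subsets and tolerances $1/m$. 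The closure $A_0$ is separable, contains $\mathfrak{F}$, and inherits the hypothesis; moreover it is nuclear, since the hypothesis is a form of the completely positive approximation property. As a finite dimensional subalgebra of $A_0$ is a finite dimensional subalgebra of $A$, it suffices to treat separable, and therefore nuclear, $A$.

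Next I would manufacture a system of CPC approximations to which Lemma~\ref{lemma 2 omega} applies. Choosing finite sets $\mathfrak{F}_r$ increasing to a dense set and tolerances $1/r$, the hypothesis yields approximations $\varphi^{(r)} = \sum_{k=1}^n \mu_k^{(r)} \varphi_k^{(r)}$; since the coefficient vectors lie in the compact $n$-simplex, I pass to a subsequence along which $\mu_k^{(r)} \to \lambda_k$ and replace $\varphi^{(r)}$ by $\sum_k \lambda_k \varphi_k^{(r)}$, which alters it by at most $\sum_k |\mu_k^{(r)} - \lambda_k| \to 0$ and so leaves a system of CPC approximations with the fixed coefficients $\lambda_k$ (set $\lambda_k = 0$ for $k > n$). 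Remark~\ref{direct sum disjoint supports} puts this into the disjoint-support direct sum form of hypothesis~(a), and Lemma~\ref{missing lemma}---available because $A$ is separable and nuclear---supplies, after reindexing, projections $p_k^{(r)}$ realising hypothesis~(b). Thus Lemma~\ref{lemma 2 omega} (with Remark~\ref{finite case of lemma}, so that $N \leq n$) becomes a black box: for every finite $\mathfrak{G} \subseteq A$ and $\delta > 0$ it produces a CPC approximation $\bigl(\bigoplus_{k=1}^N \widetilde{F}_k, \psi, \pi\bigr)$ of $\mathfrak{G}$ within $\delta$ with $\pi = \sum_k \lambda_k \pi_k$, each $\pi_k$ a $^*$-homomorphism killing $\bigoplus_{i \neq k} \widetilde{F}_i$; in particular each $B_k := \pi_k(\widetilde{F}_k)$ is a genuine finite dimensional $C^*$-subalgebra of $A$.

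Applying this to $\mathfrak{G} = \mathfrak{F}$ with small tolerance gives $\|a - \sum_k \lambda_k \pi_k\psi(a)\| < \varepsilon/2$, so each $a \in \mathfrak{F}$ is approximated by a convex combination of elements drawn from the finite dimensional subalgebras $B_1, \dots, B_N$. The obstacle is that such a convex combination need not lie near any single subalgebra: the $B_k$ typically share the unit and are far from orthogonal, so neither their external direct sum nor the algebra they generate is finite dimensional. This is resolved by approximating a second time. Each $B_k$, being finite dimensional, is the complex linear span of a finite set of its projections $\{e_i\}$ with coefficients bounded by a constant depending only on the $B_k$. Feeding these finitely many projections, together with the elements $\pi_k\psi(a)$, into Lemma~\ref{lemma 2 omega} again with a tolerance $\delta$ chosen after the $B_k$ are known, I obtain for each such projection $e$ an estimate $\|e - \sum_j \lambda_j \pi_j'\psi'(e)\| < \delta$ with each $\pi_j'\psi'(e)$ a positive contraction. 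Lemma~\ref{lemma 3 omega} then forces $\|e - \pi_1'\psi'(e)\| \leq \sqrt{\lambda_1^{-1}\delta}\,(\sqrt{\lambda_1^{-1}\delta} + 1)$; that is, the single summand $\pi_1'\psi'$ already reproduces every such projection $e$.

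Because projections span the $B_k$ with controlled coefficients, $\pi_1'\psi'$ approximately fixes all of $B_1, \dots, B_N$, so the single finite dimensional subalgebra $B_1' := \pi_1'(\widetilde{F}_1')$ contains, up to arbitrarily small error, each $\pi_k\psi(a)$ and hence their convex combination. Combining the two estimates yields $\dist(a, B_1') < \varepsilon$ for all $a \in \mathfrak{F}$, which is exactly the local characterisation of AF. The step I expect to be the main obstacle is precisely this collapse: a single application only approximates $\mathfrak{F}$ by a spread-out convex combination, and it is the asymmetry that for \emph{projections} the combination degenerates (Lemma~\ref{lemma 3 omega}), together with the fact that finite dimensional algebras are spanned by their projections, that allows the second approximation to absorb all the subalgebras $B_k$ into one.
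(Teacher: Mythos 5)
Your proposal is correct and follows essentially the same route as the paper: reduce to the separable (hence nuclear) case, fix the convex coefficients by compactness, feed the resulting system through Lemma \ref{missing lemma} into Lemma \ref{lemma 2 omega}, apply that lemma twice, and use Lemma \ref{lemma 3 omega} on projections to collapse the second convex combination onto a single $^*$-homomorphism whose finite dimensional image absorbs all the $B_k$. The only (immaterial) variations are that the paper runs the second approximation over a finite net of minimal projections of the domain $F$ combined with the spectral decomposition of $\psi(a)$, whereas you use a bounded spanning set of projections of the image algebras $B_k$, and that you should make sure to collapse onto a summand with $\lambda_k\neq 0$ (the paper discards the zero coefficients at the outset).
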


\begin{proof}
If $n=1$, the result follows from \cite[Theorem 3.4]{cov-dim}. Thus we can suppose $n \geq 2$. By the proof of \cite[Proposition 2.6]{nuclear-dimension}, any countable subset of $A$ is contained in a separable subalgebra satisfying the hypotheses of the theorem. Therefore, without loss of generality we may assume $A$ is separable. 

From the hypotheses, for any finite subset $\mathfrak{F}$ and any $\varepsilon>0$ there exist a CPC approximation $\left(F, \psi, \varphi \right)$ for $\mathfrak{F}$ within $\varepsilon$, order zero maps $\varphi_k : F \longrightarrow A$ and coefficients $\lambda^{(\mathfrak{F}, \varepsilon)}_k \geq 0$, for $k=1, \cdots, n$, such that $\sum\limits_{k=1}^{n} \lambda^{(\mathfrak{F}, \varepsilon)}_k=1$ and $\varphi= \sum\limits_{k=1}^{n} \lambda^{(\mathfrak{F},\varepsilon)}_k\varphi_k$.
By compactness of $[0,1]^n$, we may assume there are constants $\lambda_1, \cdots, \lambda_n \in [0,1]$ satisfying $\sum\limits_{k=1}^{n} \lambda_k = 1$ such that $\lambda^{(\mathfrak{F},\varepsilon)}_k = \lambda_k$ for any finite subset $\mathfrak{F}$ and $\varepsilon>0$.
Additionally we can suppose (renaming $n$ if necessary) that each $\lambda_k$ is strictly positive. Thus, by Remark \ref{direct sum disjoint supports}, for any $\mathfrak{F}$ and $\varepsilon>0$ there exists a CPC approximation $\left(\bigoplus\limits_{k=1}^n F_k , \psi, \varphi \right)$ for $\mathfrak{F}$ within $\varepsilon$ with $\varphi = \sum\limits_{k=1}^{n} \lambda_k \varphi_k$ where each $\varphi_k: F \longrightarrow A$ is an order zero map and $\bigoplus\limits_{i \neq k} F_i \subset \ker \varphi_k$.

By Lemma \ref{missing lemma}, there exist projections $p_k \in F_k$ for $1 \leq k \leq n$ such that:

\begin{enumerate}[(I)]
\item $\| \varphi(p \psi (a) p) - a \| < \varepsilon $ for all $a \in \mathfrak{F}$ with $p= \sum\limits_{k=1}^{n} p_k$,

\item $\| \varphi (p_k) - \varphi(p_k)\varphi(1_F) \| < \varepsilon$ where $1_F$ denotes the unit of $F$.
\end{enumerate}
Then we can produce, using a countable dense subset of $A$, a sequence of completely positive and contractive approximations
\[
\xymatrix{ A \ar[r]^{\psi^{(r)}} & F^{(r)} \ar[r]^{\varphi^{(r)}} & A
	}
\] 
satisfying the hypothesis of Lemma \ref{lemma 2 omega}. 
Observe that for any free filter $\mathcal{U}$ on $\mathbb{N}$ containing the cofinite filter, the last paragraph of the proof of \cite[Proposition 2.2]{dim-ncomparison} shows that the maps $\varphi_k^{(\mathcal{U})}: \prod\limits_\mathcal{U} F^{(r)} \longrightarrow A_\mathcal{U}$ are order zero.


We will apply Lemma \ref{lemma 2 omega} to replace the convex combination of order zero maps with convex combination of $^*$-homomorphisms. After this, we will proceed to replace the convex combination of $^*$-homomorphisms with exactly one of them. The choice of such a $^*$-homomorphism is not important as the estimates only depend on the corresponding coefficient by Lemma \ref{lemma 3 omega}. Therefore, in order to simplify the notation, we will choose the first one.

Fix $\mathfrak{F}$ and $\varepsilon > 0$ such that $\sqrt{\lambda_1^{-1}\varepsilon} < 1$. We can assume that any element in $\mathfrak{F}$ is positive of norm at most $1$. 
By Lemma \ref{lemma 2 omega} and Remark \ref{finite case of lemma}, there exists a completely positive and contractive approximation $\left(\bigoplus\limits_{k=1}^n F_k, \psi, \pi \right)$ such that 
\begin{equation}
\|a-\pi  \psi(a)\|< \varepsilon/3 \label{good approx 1}
\end{equation} 
for all $a \in \mathfrak{F}$ and $\pi = \sum_{k=1}^{n}\lambda_k \pi_k$ where each $\pi_k: \bigoplus\limits_{k=1}^n F_k \longrightarrow A$ is a $^*$-homomorphism satisfying $\bigoplus_{i \neq k} F_i \subset \ker \pi_k$.
 
Since the set of all minimal projections of $F_k$, $\mathcal{P}(F_k)$, is compact, we can find minimal projections $p_1,...,p_r \in \mathcal{P}(F)$ such that for all $p \in \mathcal{P}(F_k)$ and all $k$ there exists some $j \in \{1, \cdots, r\}$ such that
\begin{equation}
\|p - p_j\|< \frac{\lambda_1 \varepsilon^2}{3 \left(6M\right)^2} \label{good approx for proj}
\end{equation}
for some $j\in \{1,...,r\}$ where $M=\dim F$. Assume $p_j \in \mathcal{P}\left( F_{k_j} \right)$ and set
\begin{equation}
\mathfrak{F}'=\mathfrak{F} \cup \{\pi_{k_j}(p_j): 1\leq j \leq r \}.
\end{equation}
By Lemma \ref{lemma 2 omega} again, we find CPC maps $\psi':A \longrightarrow \bigoplus_{k=1}^n F'_k$ and $\theta:\bigoplus_{k=1}^n F'_k \longrightarrow A$ with $\theta = \sum_{k=1}^{n}\lambda_k \theta_k$, $F'_k$ finite dimensional $C^*$-algebras and each $\theta_k$ is a $^*$-homomorphism satisfying $\bigoplus\limits_{i\neq k} F'_i \subset \ker \theta_k$, 
such that 
\begin{equation}
\|a - \theta  \psi'(a) \| < \frac{\lambda_1\varepsilon^2}{3 \left(6M\right)^2} \label{good approx 2}
\end{equation} 
for all $a \in \mathfrak{F}'$. In particular for $p \in \mathcal{P}(F_k)$, let $p_j \in \mathfrak{F}'$ satisfy (\ref{good approx for proj}) so that  
\begin{eqnarray}
\|\pi_k(p) - \theta  \psi'(\pi_k(p)) \| &<& \| \pi_k(p) - \pi_k(p_j) \| + \| \pi_k(p_j) - \theta  \psi'(\pi_k(p_j)) \| \\
& & + \; \| \theta  \psi'(\pi_k(p_j)) - \theta  \psi'(\pi_k(p)) \|  \\ &<&
\frac{\lambda_1 \varepsilon^2}{ \left(6M\right)^2}.
\end{eqnarray}
Using that $\sqrt{\lambda_1^{-1}\varepsilon} < 1$ and Lemma \ref{lemma 3 omega}, we obtain 
\begin{equation}
\| \pi_k(p) - \theta_1  \psi' (\pi_k(p)) \| \leq \frac{\varepsilon}{3M} \label{3/M}
\end{equation}
for all $k$. For any $a \in \mathfrak{F}$, by the spectral theorem for Hermitian matrices, we can write 
\begin{equation}
\psi (a) = \sum_{i=1}^d t_i q_i
\end{equation}
with $0 \leq t_i \leq 1$ where $\{ q_i \in F: 1 \leq i \leq d \} $ is some set of minimal projections and $d \leq M$. Using the last identity and (\ref{3/M}) we have
\begin{eqnarray}
\left\| \pi \psi (a) - \theta_1 \psi' \pi \psi(a) \right\| &=& \left\| \sum_{i,k} t_i \lambda_k \pi_k(q_i) - \sum_{i,k} t_i \lambda_k \theta_1 \psi' \pi_k(q_i) \right\| \\
& \leq & \sum_{k=1}^{n} \lambda_k \left( \sum_{i=1}^{d} \| \pi_k(q_i) - \theta_1 \psi' \pi_k(q_i) \| \right) \\
& \leq & \sum_{k=1}^{n} \lambda_k \left( \frac{\varepsilon d}{3M} \right) \leq \frac{\varepsilon}{3}.
\end{eqnarray}

Finally, using the last inequality and (\ref{good approx 1}) we obtain
\begin{eqnarray}
\| a - \theta_1 \psi' (a) \| & \leq & \| a- \pi \psi(a) \| + \| \pi \psi (a) - \theta_1 \psi'\pi \psi(a)) \| \\
& & + \| \theta_1 \psi'(\pi \psi (a) - a ) \| \\
& < & \frac{\varepsilon}{3} + \frac{\varepsilon}{3} + \frac{\varepsilon}{3} = \varepsilon.
\end{eqnarray}
Thus $\dist(a,\theta_1(F'_1))< \varepsilon$ for all $a \in \mathfrak{F}$. Since $\theta_1: F'_1 \longrightarrow A$ is a $^*$-homomorphism and $F'_1$ is a finite dimensional $C^*$-algebra, $\theta_1 \left(F'_1\right)$ is also a finite dimensional algebra. Therefore $A$ is an AF-algebra.
\end{proof}

\begin{remark} \label{remark}
By the previous theorem, the decomposable approximations of a nuclear $C^*$-algebra $A$ given by \cite[Theorem 1.4]{Hir-Kir-Whi} can witness finite nuclear dimension (in fact, decomposition rank since $\varphi$ is forced to be contractive) if and only if $A$ is an approximately finite dimensional $C^*$-algebra.
\end{remark}

\subsection*{Acknowledgements} The author is grateful to his advisor, Stuart White, for his helpful insights and guidance throughout this work. The author would also like to thank Joan Bosa, Sam Evington, Gabriele Tornetta and Joachim Zacharias for their useful conversations during the early stages of this work. The author thanks the anonymous referee for their suggestions on an earlier version of this paper.

\bibliographystyle{plain}

\bibliography{papers}

\begin{thebibliography}{10}

\bibitem{blackadar}
Bruce Blackadar.
\newblock {\em Operator algebras}, volume 122 of {\em Encyclopaedia of
  Mathematical Sciences}.
\newblock Springer-Verlag, Berlin, 2006.
\newblock Theory of $C{^{*}}$-algebras and von Neumann algebras, Operator
  Algebras and Non-commutative Geometry, III.

\bibitem{bratelli-AF}
Ola Bratteli.
\newblock Inductive limits of finite dimensional {$C^{\ast} $}-algebras.
\newblock {\em Trans. Amer. Math. Soc.}, 171:195--234, 1972.

\bibitem{busby}
Robert Busby.
\newblock Double centralizers and extensions of {$C^{\ast} $}-algebras.
\newblock {\em Trans. Amer. Math. Soc.}, 132:79--99, 1968.

\bibitem{choi-effros-CPAP}
Man~Duen Choi and Edward Effros.
\newblock Nuclear {$C\sp*$}-algebras and the approximation property.
\newblock {\em Amer. J. Math.}, 100(1):61--79, 1978.

\bibitem{regularity-prop}
George Elliott and Andrew Toms.
\newblock Regularity properties in the classification program for separable
  amenable {$C^*$}-algebras.
\newblock {\em Bull. Amer. Math. Soc}, 45(2):229--245, 2008.

\bibitem{farah-AF}
Ilijas Farah and Takeshi Katsura.
\newblock Nonseparable {UHF} algebras {I}: {D}ixmier's problem.
\newblock {\em Adv. Math.}, 225(3):1399--1430, 2010.

\bibitem{Hir-Kir-Whi}
Ilan Hirshberg, Eberhard Kirchberg, and Stuart White.
\newblock Decomposable approximations of nuclear {$C^*$}-algebras.
\newblock {\em Adv. Math.}, 230(3):1029--1039, 2012.

\bibitem{kirchberg-CPAP}
Eberhard Kirchberg.
\newblock {$C\sp*$}-nuclearity implies {CPAP}.
\newblock {\em Math. Nachr.}, 76:203--212, 1977.

\bibitem{Kir-Win}
Eberhard Kirchberg and Wilhelm Winter.
\newblock Covering dimension and quasidiagonality.
\newblock {\em Internat. J. Math.}, 15(1):63--85, 2004.

\bibitem{pre-Matui-Sato}
Hiroki Matui and Yasuhiko Sato.
\newblock Strict comparison and {$\mathcal{Z}$}-absorption of nuclear
  {$C^*$}-algebras.
\newblock {\em Acta Math.}, 209(1):179--196, 2012.

\bibitem{matui-sato}
Hiroki Matui and Yasuhiko Sato.
\newblock Decomposition rank of {UHF}-absorbing {$C^*$}-algebras.
\newblock {\em Duke Math. J.}, 163(14):2687--2708, 2014.

\bibitem{dim-ncomparison}
Leonel Robert.
\newblock Nuclear dimension and {$n$}-comparison.
\newblock {\em M\"unster J. Math.}, 4:65--71, 2011.

\bibitem{sato-winter-white}
Yasuhiko Sato, Stuart White, and Wilhelm Winter.
\newblock Nuclear dimension and {$\mathcal{Z}$}-stability.
\newblock {\em Invent. Math.}, in press. arXiv:1403.0747 [math.OA].

\bibitem{aaron-wilhelm}
Aaron Tikuisis and Wilhelm Winter.
\newblock Decomposition rank of {$\mathcal{Z}$}-stable {$C^*$}-algebras.
\newblock {\em Anal. PDE.}, 7(3):673--700, 2014.

\bibitem{cov-dim}
Wilhelm Winter.
\newblock Covering dimension for nuclear {$C^*$}-algebras.
\newblock {\em J. Funct. Anal.}, 199(2):535--556, 2003.

\bibitem{monstruosity-I}
Wilhelm Winter.
\newblock Decomposition rank and {$\mathcal{Z}$}-stability.
\newblock {\em Invent. Math.}, 179(2):229--301, 2010.

\bibitem{monstruosity-II}
Wilhelm Winter.
\newblock Nuclear dimension and {$\mathcal{Z}$}-stability of pure
  {$C^*$}-algebras.
\newblock {\em Invent. Math.}, 187(2):259--342, 2012.

\bibitem{order-zero}
Wilhelm Winter and Joachim Zacharias.
\newblock Completely positive maps of order zero.
\newblock {\em M\"unster J. Math.}, 2:311--324, 2009.

\bibitem{nuclear-dimension}
Wilhelm Winter and Joachim Zacharias.
\newblock The nuclear dimension of {$C^\ast$}-algebras.
\newblock {\em Adv. Math.}, 224(2):461--498, 2010.

\bibitem{wolff}
Manfred Wolff.
\newblock Disjointness preserving operators on {$C^*$}-algebras.
\newblock {\em Arch. Math. (Basel)}, 62(3):248--253, 1994.

\end{thebibliography}

\end{document}